\theoremstyle{thmstyleone}%
\newtheorem{theorem}{Theorem}[section]
\newtheorem{proposition}{Proposition}%
\theoremstyle{thmstyletwo}%
\newtheorem{remark}{Remark}%
\theoremstyle{thmstylethree}%
\newtheorem{definition}{Definition}%
\newcommand{\Z}{\mathbb{Z}}
\newcommand{\R}{\mathbb{R}}
\newtheorem{lemma}[theorem]{Lemma}
\theoremstyle{definition}
\newlist{steps}{enumerate}{1}
\setlist[steps, 1]{label = Step \arabic*:}
\newcommand{\colim@}[2]{%
  \vtop{\m@th\ialign{##\cr
    \hfil$#1\operator@font colim$\hfil\cr
    \noalign{\nointerlineskip\kern1.5\ex@}#2\cr
    \noalign{\nointerlineskip\kern-\ex@}\cr}}%
}
\newcommand{\colim}{%
  \mathop{\mathpalette\colim@{\rightarrowfill@\scriptscriptstyle}}\nmlimits@
}
\renewcommand{\varprojlim}{%
  \mathop{\mathpalette\varlim@{\leftarrowfill@\scriptscriptstyle}}\nmlimits@
}
\renewcommand{\varinjlim}{%
  \mathop{\mathpalette\varlim@{\rightarrowfill@\scriptscriptstyle}}\nmlimits@
}
\newcommand{\metric}[2]{\langle{#1},{#2}\rangle}
\newcommand{\Ric}[2]{\text{Ric}(#1,#2)}
\begin{document}

\title[Article Title]{Toric Einstein 4-manifolds with non-negative sectional curvature}


\author*[1]{\fnm{Tianyue} \sur{Liu}}\email{ltianyue@sas.upenn.edu}



\affil*[1]{\orgdiv{Department of Mathematics}, \orgname{University of Pennsylvania}, \orgaddress{\street{209 S 33rd Street}, \city{Philadelphia}, \postcode{19104}, \state{PA}, \country{USA}}}

 

\abstract{
We prove that $T^2$-invariant Einstein metrics with non-negative sectional curvature on a four-manifold are locally symmetric.}
\keywords{Einstein manifold, torus action, sectional curvature, four-manifold}



\maketitle

\section{Introduction}
Einstein manifolds are manifolds equipped with a Riemannian metric which solves the Einstein equation
\begin{align}\label{eq:1}
\text{Ric}(g)=\Lambda g
\end{align} We shall focus our attention on closed 1-connected four dimensional manifolds, and we are only concerned with the case $\Lambda>0$, as we will be studying those manifolds with continuous symmetry. In this context very little is known about the solutions of these nonlinear PDEs, except when we add extra assumptions to simplify the situation. In particular, all currently-known examples satisfy at least one of the following two conditions:
\begin{enumerate}
\item The metric is conformally K\"ahler.
\item The metric is invariant under some $T^2$-action.
\end{enumerate} 
In the first case, there is a complete classification: the only Hermitian Einstein metrics are 
the Page metric on $\mathbb{CP}^2\# \overline{\mathbb{CP}}^2$ \cite{Pa78}, the Chen-LeBrun-Weber metric on $\mathbb{CP}^2\# 2\overline{\mathbb{CP}}^2$ \cite{CLW08}, and the K\"ahler-Einstein metric on other del Pezzo surfaces (c.f. \cite{OSS16}\cite{Ti90}). 
A remarkable feature of the non-K\"ahler examples is that they are all invariant under a biholomorphic $T^2$-action, which simplifies the geometry significantly.\par
In the second case much less is known. As mentioned in the survey paper of Anderson \cite{An10}, it would be interesting to know which four-dimensional manifolds admitting $T^2$-actions also have $T^2$-invariant Einstein metrics. Moreover, since on most of these manifolds there exist many different $T^2$-actions, it is also interesting to ask if on some of them there could be multiple Einstein metrics invariant under different $T^2$-actions. Topologically, four-manifolds admitting a $T^2$-action are obtained by taking connected sums of $S^4,\mathbb{CP}^2,\bar{\mathbb{CP}}^2$ and $S^2\times S^2$. It is known that these connected sums all admit metrics of positive Ricci curvature \cite{SY90} and $T^2$-invariant metrics of positive scalar curvature (\cite{Wi15}, Corollary 2.6), but to the author's knowledge, on most of them it is not known if there are $T^2$-invariant metrics with positive Ricci curvature. Nevertheless, for the diagonal $T^2$-actions on connected sums of $S^2\times S^2$ the construction of Sha and Yang in \cite{SY90} yields invariant metrics of positive Ricci curvature. (See Section 4 of \cite{Re24B} for more details.)
As for $T^2$-invariant Einstein metrics with $\Lambda>0$, only six examples are known so far: the round metric on $S^4$, the Fubini-Study metric on $\mathbb{CP}^2$, the product metric on $S^2\times S^2$, the Page metric, the Chen-LeBrun-Weber metric, and the unique K\"ahler Einstein metric on $\mathbb{CP}^2\# 3\overline{\mathbb{CP}}^2$ \cite{Si88}. On the other hand, the best known obstruction to the existence of Einstein metric with $\Lambda>0$ is given by the Hitchin-Thorpe inequality \cite{Hi74} 
\[2\chi\pm3\tau> 0
\]
where $\chi$ is the Euler characteristic and $\tau$ is the signature. \footnote{For various improvements of the Hitchin-Thorpe inequality, see e.g. \cite{Gr82},\cite{Le98},\cite{Sa98}} If we impose more restrictions on the Einstein metric, then there are further results: In \cite{GL98}, Gursky and LeBrun showed that there is no Einstein metric with non-negative sectional curvature other than the Fubini-Study metric if the intersection form is positive definite, regardless of whether the metric admits isometries.\par
The main result of this paper is the following: 
\begin{theorem}\label{1.1}
Assume $M^4$ is closed and simply connected. If $M^4$ has a $T^2$-invariant Einstein metric $g$ with non-negative sectional curvature, then $(M^4,g)$ is isometric to the product $S^2\times S^2$ of two spheres of the same radii, the round sphere $S^4$ or a constant multiple of the Fubini-Study metric on $\mathbb{CP}^2$.
\end{theorem}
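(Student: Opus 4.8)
The plan is first to constrain the diffeomorphism type of $M^4$ to a short list using the symmetry together with nonnegative curvature, and then to identify (or rule out) the metric on each survivor. By the classification recalled above, a closed simply connected $4$-manifold carrying an effective $T^2$-action is $S^4$ or a connected sum of copies of $\C P^2$, $\bar{\C P}^2$ and $S^2\times S^2$; its $T^2$-fixed set is finite and corresponds to the corners of the orbit polygon $M/T^2$ (a $2$-disk whose boundary edges record the isotropy circles), with $\#M^{T^2}=\chi(M)=2+b_2(M)$. I would then use nonnegative sectional curvature to bound the number of corners: $M/T^2$ is a nonnegatively curved Alexandrov $2$-disk, and a Gauss--Bonnet/Toponogov estimate on it, combined with the admissibility relations between consecutive isotropy slopes, forces at most four corners, hence $\chi(M)\le 4$ and $b_2(M)\le 2$. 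Thus $M$ is diffeomorphic to one of $S^4$, $\C P^2$, $S^2\times S^2$, $\C P^2\#\C P^2$, or $\C P^2\#\bar{\C P}^2$.

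\emph{The definite cases and the analytic toolbox.} If the intersection form of $M$ is positive (or negative) definite, Gursky--LeBrun \cite{GL98} applies directly: this identifies the $\C P^2$ case with the Fubini--Study metric and eliminates $\C P^2\#\C P^2$. The remaining manifolds $S^4$, $S^2\times S^2$, $\C P^2\#\bar{\C P}^2$ are attacked with the following tools: the Gauss--Bonnet and signature integral formulas, which for an Einstein metric read $2\chi\pm3\tau=\tfrac1{4\pi^2}\int_M\bigl(\tfrac{s^2}{24}+2|W^{\pm}|^2\bigr)$ with $s>0$; the Bochner--Weitzenb\"ock identity $\nabla^*\nabla\omega^{\pm}+\tfrac{s}{3}\omega^{\pm}=2W^{\pm}(\omega^{\pm})$ for harmonic self-dual and anti-self-dual $2$-forms; and the pointwise restriction imposed by nonnegative sectional curvature, which for an Einstein metric is equivalent to $\mu_{\min}(W^+)+\mu_{\min}(W^-)+\tfrac{s}{6}\ge0$ at every point, $\mu_{\min}$ denoting the lowest eigenvalue.

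\emph{Rigidity via the torus symmetry.} For the survivors the aim is to show that nonnegative sectional curvature forces the metric into a rigid class. When $b_2^+=1$ (the cases $S^2\times S^2$ and $\C P^2\#\bar{\C P}^2$, both with $\tau=0$), a harmonic self-dual $2$-form $\omega^+$ exists, and I would combine the Bochner identity with $\sec\ge0$ and with the reduced Einstein equations on the orbit polygon---an elliptic boundary-value system for the orbit metric $q_{ij}(x)$ and the base metric, degenerate along the edges and corners---to conclude that $\omega^+$ is parallel. A parallel self-dual $2$-form makes $(M,g)$ K\"ahler--Einstein for a $T^2$-invariant complex structure, hence $M$ is a del Pezzo surface admitting a K\"ahler--Einstein metric: this identifies $M$ as $\C P^2$ with the Fubini--Study metric or $S^2\times S^2$ with the product of two round spheres of equal radius, and rules out $\C P^2\#\bar{\C P}^2$, since the first Hirzebruch surface admits no K\"ahler--Einstein metric. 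The case $M\cong S^4$ ($b_2^\pm=0$) is treated by the same analysis of the reduced equations, which forces constant sectional curvature and hence the round metric.

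\emph{The main obstacle.} The heart of the matter is the rigidity step. The topological reduction and the definite cases are comparatively soft; the difficulty lies in the rigidity on $S^2\times S^2$ and $S^4$, and above all in the nonexistence statement on $\C P^2\#\bar{\C P}^2$, which cannot rely on the (unproven) uniqueness of the Page metric and so must be extracted from a direct study of the cohomogeneity-two Einstein system together with the curvature sign---in particular from upgrading the weak pointwise inequality $\mu_{\min}(W^+)+\mu_{\min}(W^-)+\tfrac{s}{6}\ge0$ to genuine control on $\mu_{\max}(W^+)$ by exploiting the $T^2$-symmetry and the degenerate behaviour of the orbit metric along the boundary of $M/T^2$.
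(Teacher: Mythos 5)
Your topological reduction is sound (the Gauss--Bonnet count on the right-angled geodesic polygon $M/T^2$ does force at most four corners, recovering the list $S^4$, $\C P^2$, $S^2\times S^2$, $\C P^2\#\C P^2$, $\C P^2\#\bar{\C P}^2$), and the appeal to Gursky--LeBrun for the definite cases matches the paper. But the heart of the theorem --- the rigidity on $S^2\times S^2$ and $S^4$ and the exclusion of $\C P^2\#\bar{\C P}^2$ --- is exactly the step you leave as ``the main obstacle,'' and the mechanism you propose for it does not close. The Bochner identity gives $\int_M|\nabla\omega^+|^2=\int_M\bigl(2W^+(\omega^+,\omega^+)-\tfrac{s}{3}|\omega^+|^2\bigr)$, so to conclude $\omega^+$ is parallel you need $\mu_{\max}(W^+)\le s/6$ pointwise; but $\sec\ge0$ for an Einstein metric only gives $\mu_{\min}(W^+)+\mu_{\min}(W^-)+\tfrac{s}{6}\ge0$, and combined with tracelessness this yields at best $\mu_{\max}(W^+)\le s/3$ --- off by a factor of two, with no evident way to improve it. You acknowledge this, but without that upgrade the argument produces nothing, and for $S^4$ (where $b_2^\pm=0$ and there is no harmonic form to run Bochner on) you assert that ``the reduced equations force constant sectional curvature'' with no argument at all.

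The paper's route is genuinely different and is worth contrasting. It first proves a structural result with no curvature hypothesis: the mixed components $\mathrm{Ric}(X,V)$ of the Einstein equation force the horizontal distribution to be integrable, i.e.\ the action is \emph{geometrically polar}. The resulting section embeds the orbit space $\Sigma$ totally geodesically, so $\sec\ge0$ forces $\Sigma$ to be flat --- a flat square in the four-corner cases. The vertical Einstein equations then reduce to a second-order linear elliptic PDE for the orbit inner products $g^\Omega_{ij}$ with prescribed Dirichlet and Neumann data on the edges, whose solution is \emph{unique} by a maximum-principle argument; solving it explicitly on the square shows the determinant constraint $\det(g^\Omega_{ij})=\phi^2$ fails unless the gluing integer $p=0$, which kills $\C P^2\#\bar{\C P}^2$ and leaves only the product metric on $S^2\times S^2$. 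For $S^4$ the action is diagonal, the maximum principle forces $g^\Omega_{12}\equiv0$, and a direct computation of the curvature operator shows a nonnegatively curved diagonal metric has nonnegative isotropic curvature, whence Brendle's theorem gives local symmetry and the round metric. If you want to salvage your outline, you would need to either carry out the $\mu_{\max}(W^+)$ upgrade (which appears to require precisely the kind of reduced-equation analysis the paper performs) or replace the Bochner step by the polarity/flat-orbit-space/uniqueness chain above.
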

In particular, any closed toric Einstein 4-manifold of non-negative sectional curvature must be locally symmetric.\par

We shall now sketch the proof of our main result. The strategy is to use the non-negativity of sectional curvature as well as the existence of symmetry to pin down the diffeomorphism type first, then investigate each diffeomorphism type case-by-case. \par    
    According to \cite{HK89}, \cite{GS11} and \cite{GW14}, the non-negativity of sectional curvature along with the existence of continuous symmetry implies that $M^4$ is diffeomorphic to one of $S^4,\mathbb{CP}^2,S^2\times S^2$, $\mathbb{CP}^2\#\overline{\mathbb{CP}}^2$, or  $\mathbb{CP}^2\#\mathbb{CP}^2$. By \cite{GL98}, there is no non-negatively curved Einstein metric on $\mathbb{CP}^2\#\mathbb{CP}^2$, and on $\mathbb{CP}^2$ there is only the Fubini-Study metric. Hence assuming $M^4$ is as stated in \ref{1.1}, then one of the following holds:\par
    \begin{enumerate}
        \item $\chi(M^4)=2$, and $M^4$ is diffeomorphic to $S^4$.
        \item $\chi(M^4)=3$, and $(M^4,g)$ is isometric to the Fubini-Study metric on $\mathbb{CP}^2$.
        \item $\chi(M^4)=4$, and $M^4$ is diffeomorphic to either $S^2\times S^2$ or $\mathbb{CP}^2\#\overline{\mathbb{CP}}^2$.   
    \end{enumerate}
We shall first show in section \ref{5} that in the $\chi(M)=4$ case $(M^4,g)$ is isometric to $S^2\times S^2$ with the product Einstein metric,
then proceed in section \ref{6} to show that in the $\chi(M)=2$ case $(M^4,g)$ is isometric to the round sphere.
\par
On $S^2\times S^2$ or $\mathbb{CP}^2\#\bar{\mathbb{CP}}^2$ there are infinitely many torus actions up to equivariant diffeomorphism. These actions arise as automorphisms of different Hirzebruch surfaces. A $T^2$-invariant metric on these 4-manifolds induces a quotient metric on the orbit space $\Sigma=M^4/T^2$, making it a curved quadrilateral with geodesic edges and right angle corners. To classify non-negatively curved $T^2$-invariant Einstein metrics on them, we first show the following general statement, which holds regardless of whether non-negative sectional curvature is assumed:
\begin{theorem}
    An isometric $T^2$-action on a closed and simply connected Einstein 4-manifold $M^4$ is geometrically polar---i.e. there exists a section $\sigma:\Sigma \to M^4$ everywhere perpendicular to the orbits. Moreover, given an action on $M^4$ and a metric $g_\Sigma $ on the orbit space, there is at most one invariant Einstein metric on $M^4$ with induced metric on the orbit space isometric to $g_\Sigma$.
\end{theorem}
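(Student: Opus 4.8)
To prove the two assertions I treat polarity and uniqueness in turn.

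\emph{Polarity.} Let $X_1,X_2$ be the commuting Killing fields generating the $T^2$-action and, for $i=1,2$, set $\omega_i:=\star(X_i^\flat\wedge dX_i^\flat)$. The key point is that $\omega_i$ is closed. Writing $\omega_i=\pm\,\iota_{X_i}\!\star dX_i^\flat$ and using $\mathcal{L}_{X_i}(\star dX_i^\flat)=\star\,d\mathcal{L}_{X_i}X_i^\flat=0$, Cartan's formula gives $d\omega_i=\mp\,\iota_{X_i}(d\star dX_i^\flat)$; since $g$ is Einstein, $d^*dX_i^\flat=2\Lambda X_i^\flat$, so $d\star dX_i^\flat$ is a multiple of $\star X_i^\flat$ and $\iota_{X_i}\!\star X_i^\flat=\pm\star(X_i^\flat\wedge X_i^\flat)=0$; hence $d\omega_i=0$. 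As $M^4$ is simply connected, $\omega_i=dw_i$ for some function $w_i$, which may be averaged over $T^2$ (legitimate since $\omega_i$ is $T^2$-invariant) to become $T^2$-invariant itself. Then for $i\neq j$, $0=X_j(w_i)=\iota_{X_j}\omega_i=\pm\star(X_1^\flat\wedge X_2^\flat\wedge dX_i^\flat)$, and by the Frobenius criterion this says exactly that the horizontal distribution $\mathcal{H}=(T\,\text{orbit})^{\perp}$ is integrable on the principal part $M^{\circ}$. The resulting flat $T^2$-connection over the disk $\Sigma^{\circ}=M^{\circ}/T^2$ admits a parallel section; the remaining, routine, step is to extend it across the singular set via the standard local models (each singular surface $M^{S^1}$ is totally geodesic, and near an isolated $T^2$-fixed point the action is linear), producing a section $\sigma\colon\Sigma\to M^4$ orthogonal to the orbits.

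\emph{Uniqueness.} Given an invariant Einstein metric with orbit-space metric $g_\Sigma$, use such a section to write $g=\pi^*g_\Sigma+h$, where $h\colon\Sigma^{\circ}\to\mathrm{Sym}^2_{+}(\R^2)$ records the orbit metric $h_{ij}=g(X_i,X_j)$; split $h=\rho\,\hat h$ with $\rho=(\det h)^{1/2}>0$ and $\hat h$ valued in the nonpositively curved symmetric space $SL(2,\R)/SO(2)\cong\mathbb{H}^2$. The plan is to show that, with $g_\Sigma$ fixed, the Einstein equations are equivalent to: (i) the linear equation $\Delta_{g_\Sigma}\rho+2\Lambda\rho=0$, so $\rho$ is a positive Dirichlet eigenfunction on $(\Sigma,g_\Sigma)$ — in particular $2\Lambda$ must be the lowest Dirichlet eigenvalue of $(\Sigma,g_\Sigma)$, and $\rho$ is determined up to a positive constant; (ii) a $\rho$-weighted harmonic-map equation $\operatorname{div}_{g_\Sigma}(\rho\,\hat h^{-1}\nabla\hat h)=0$ for $\hat h\colon\Sigma^{\circ}\to\mathbb{H}^2$ (the cosmological term contributes nothing here, since $\int_M\Lambda\,dv_g=\Lambda\int_\Sigma\rho\,dA_{g_\Sigma}$ does not involve $\hat h$); and (iii) the remaining ``horizontal'' Einstein equations, which are implied by (i)--(ii) and the contracted Bianchi identity once their values on $\partial\Sigma$ are known.

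The boundary data are dictated by the action alone: along an edge of $\Sigma$ with isotropy circle $S^1_{(p,q)}$, the collapse of that circle forces $\hat h$ to converge to the ideal point of $\partial_\infty\mathbb{H}^2\cong\mathbb{P}^1$ determined by $[(p,q)]$, while the linear models at the corners fix the matching and the remaining scale. Uniqueness of $g$ then follows: $\rho$ is unique up to scale (the first Dirichlet eigenspace is one-dimensional); $\hat h$ is unique by the classical convexity argument for harmonic maps into a nonpositively curved target with prescribed boundary values (the energy is convex along the geodesic homotopy joining two solutions, which is constant on $\partial\Sigma$); and the scale together with the integration constants in (iii) are then forced. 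I expect the main obstacle to be exactly this boundary analysis — verifying the harmonic-map reduction and its uniqueness through the degeneration of $\hat h$ onto the ideal boundary of $\mathbb{H}^2$ at the singular orbits, and fixing the integration constants in (iii) in the presence of the cosmological term $\Lambda>0$. The polarity half, by contrast, is essentially the short argument above plus the standard extension across the singular set.
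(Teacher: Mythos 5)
Your polarity argument is correct and genuinely different from the paper's. The closedness of the twist one-form $\omega_i=\star(X_i^\flat\wedge dX_i^\flat)$ follows, as you say, from $d^*dX_i^\flat=2\Ric{X_i}{\cdot}=2\Lambda X_i^\flat$, and exactness on the simply connected $M$ plus $T^2$-invariance of the potential kills both Frobenius obstructions $\star(X_1^\flat\wedge X_2^\flat\wedge dX_i^\flat)$. This is the classical Papapetrou-type route. The paper instead computes the mixed Ricci components directly, obtaining $\Ric{Y}{V_i}$ as a weighted divergence of $\metric{[X,Y]}{V_i}$ over $\Sigma$, so that the Einstein condition makes $\frac{\sqrt{\det g^\Omega}}{\sqrt{\det g^\Sigma}}\metric{[X,Y]}{V_i}$ constant, and then uses the collapse of $\det g^\Omega$ at $\partial\Sigma$ to conclude the constant is zero. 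Your version uses $H^1(M)=0$ and avoids the boundary analysis; the paper's uses the degeneration at the singular strata and avoids Hodge theory. Both are sound; the extension of the section across the singular set is routine in either case.

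The uniqueness half is where the real gap is. Your reduction is structurally correct: the vertical Einstein equations are equivalent to $\Delta_\Sigma\rho+2\Lambda\rho=0$ together with $\operatorname{div}_\Sigma(\rho\,\hat h^{-1}\nabla\hat h)=0$, and the $\Lambda$-term indeed drops out of the $\hat h$-equation. But the step you lean on --- ``the classical convexity argument for harmonic maps into a nonpositively curved target with prescribed boundary values'' --- does not apply as stated: the boundary values of $\hat h$ are \emph{ideal} points of $\partial_\infty\mathbb{H}^2$, the energy is infinite, and the geodesic homotopy between two solutions is not constant on $\partial\Sigma$ in any naive sense; a priori the hyperbolic distance between two solutions can blow up along the edges. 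Making this work requires a Mazur/Robinson-type divergence identity together with a quantitative analysis of the rate at which $\hat h$ degenerates along each edge and at the corners (exactly the smoothness conditions $a^2=r^2+O(r^4)$, $b=O(r^2t^2)$, etc., of the paper's section 2), and it is also these conditions --- not just the one-dimensionality of the first Dirichlet eigenspace --- that fix the overall scale of $\rho$. The paper sidesteps the harmonic-map nonlinearity entirely: a $2\times 2$ adjugate identity shows that each component $g^\Omega_{ij}$ \emph{separately} satisfies the scalar linear equation $-\tfrac12\Delta_\Sigma u+\tfrac{1}{2\phi}\metric{\nabla_\Sigma\phi}{\nabla_\Sigma u}+2\sec_\Sigma u=\Lambda u$ with boundary data dictated by the isotropy labels, and uniqueness follows from the strong maximum principle applied to $u/(g^\Omega_{11}+g^\Omega_{22})$. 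So your framework is viable and well precedented in the black-hole-uniqueness literature, but as written the central analytic step of the uniqueness claim is asserted rather than proved.
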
  
 The proof of the first statement can be found in section \ref{3}, and in section \ref{4} we list the curvature formulae that lead to a proof of the second statement in section \ref{5}. We note that the first result has also been found by \cite{Da06}. Since the section $\sigma$ provides a totally geodesic embedding of $\Sigma$ into $M^4$, the assumption of non-negative sectional curvature forces the orbit space to be flat. \footnote{We later learned that the proof of lemma 2.2 in \cite{GS11} implicitly implies that the quotient here is a flat rectangle. Nevertheless, we still need the metric to be geometric polar to solve the Einstein equations.} We then investigate the sizes of the singular strata, and show the orbit space actually must be a square in Lemma \ref{5.1}. Then in Proposition \ref{3} we solve the Einstein equation explicitly to deduce that only the product action on $S^2\times S^2$ could preserve Einstein metrics with square orbit space. We then conclude from the solution to Einstein equation that the unique possibility in the case of $\chi(M)=4$ is the product metric on $S^2\times S^2$. \par
On $S^4$, there is only one torus action up to equivariant diffeomorphism, which is a diagonal action (see definition \ref{def:1}). In section \ref{6} we show that if an Einstein manifold (with no other curvature assumption) is invariant under a diagonal action, then it must be a diagonal metric, i.e. there are two Killing fields perpendicular to each other everywhere. Then we show in Theorem \ref{6.2} for diagonal metrics, non-negative sectional curvature implies non-negative isotropic curvature, hence a result of Brendle in \cite{Br10} can be applied to show any such Einstein metric must be locally symmetric, and thus isometric to a round sphere $S^4$.\par
We shall extend our classification of simply connected examples into a classification of orientable examples in section \ref{7}. Besides the classification results, we also show in section \ref{8} that geometrically polar and diagonal actions are preserved under the Ricci flow. 
\par
The author would like to thank his thesis advisor Wolfgang Ziller for his constant support and encouragement. He would also like to thank Dennis DeTurck, Davi Maximo, Renato Bettiol, McFeely Jackson Goodman, Sammy Sbiti, and Yingdi Qin for many helpful conversations. Finally, he would like to thank Michael Wiemeler for pointing out the reference \cite{Wi15}, thank Philipp Reiser for pointing out the reference \cite{Re24B}, and thank Thomas Murphy for pointing out that in the unpublished chapter 4 of Dammerman's thesis \cite{Da06} he has obtained our theorem \ref{3.2} as well as the formulae in section \ref{4}, Finally, he would like to thank the anonymous reviewer for their invaluable suggestions. This work is part of the author's PhD thesis.\par

\section{\texorpdfstring{Smoothness conditions for $T^2$-}{Smoothness conditions for T2-}invariant metrics}\label{2}
The purpose of this section is to review Orlik and Raymond's description of $T^2$ invarant actions, and to determine the smoothness conditions for $T^2$-invariant metrics. In particular, in lemma \ref{2.1} we discuss the smoothness conditions near the codimension-two strata, and in lemma \ref{2.2} we discuss the smoothness conditions near the fixed points.\par
Orlik and Raymond \cite{OR70} classified closed simply connected toric 4-manifolds up to equivariant diffeomorphism. The torus actions endow these manifolds with a natural stratification based on the isotropy groups. The principal stratum is an open dense subset consisting of points where the isotropy groups are trivial. Topologically, it is diffeomorphic to $T^2\times D^2$. There are two kinds of singular strata contained in the closure of the principal stratum: The codimension-two strata, which form an open dense subset of the singular strata, consists of many connected components where the isotropy groups are circles inside $T^2$ of some slope. We shall denote the circle of slope $(p,q)$ as $S(p,q)$, i.e. $S(p,q)=\{(e^{ip\theta},e^{iq\theta})|\theta\in[0,2\pi)\}\subseteq T^2$. We shall always assume $\text{gcd}(p,q)=1$. Different connected components of codimension-two strata are separated by the codimension-four strata, which are fixed points of the action. Such a stratification of the 4-manifold induces a stratification of the orbit space, which is homeomorphic to a two dimensional polygon: The principal stratum descends to the interior of the polygon, the codimension-two strata descends to the edges, and the codimension-four strata descends to the vertices. Conversely, if one starts with a two dimensional polygon $\Sigma$ with each edge $e_i$ labeled by an integer-valued vector $(a_i,b_i)$ such that $\text{gcd}(a_i,b_i)=1$, then as long as at all vertices $v_i=\overline{e_i}\cap \overline{e_{i+1}}$ one has $\det\begin{bmatrix}
    a_i&a_{i+1}\\
    b_i&b_{i+1}
\end{bmatrix}=\pm1$, one can reconstruct a toric 4-manifold $M$ with isotropy groups of codimension-two strata having the corresponding slopes. Orlik and Raymond \cite{OR70} also showed that these torus actions are all topologically polar, i.e. there is always a smooth section $\sigma:\Sigma\to M$, by first showing that the action is topologically polar near the singular strata, and then applying obstruction theory to extend the section to the principal stratum. If $\Z_2\times \Z_2\subseteq T^2$ is the subgroup of two-torsion points, then one can use this subgroup to reflect the image $\sigma(\Sigma)$ and obtain a closed embedded surface $\tilde{\Sigma}$ in $M$. If $g$ is a metric on $M$ invariant under the torus action, then by the slice neighborhood theorem the induced metric $g^\Sigma$ on the orbit space $\Sigma$ must be isometric to a (curved) polygon with all angles being right angles, and all edges being geodesics. However, it may not be isometric to the metric on $\Sigma$ induced by the topological section, as the topological section may not be everywhere perpendicular to the orbits.\par
In the rest part of this section we shall compute the Taylor expansions of smooth $T^2$-invariant metrics on $M$ near the singular strata described above. \par
We first deal with the codimension-two strata. To begin with, we describe the torus action on a slice neighborhood. Let $p\in M$ be a point in one of the codimension-two stratum $Z$, and $O:=T^2.p$ be its orbit. Without loss of generality, we can assume that the isotropy group of $p$ is $S(1,0)$. We shall use the notation $\nu^{\leq \epsilon}(A/B)$ to denote the set of vectors of length less than or equal to $\epsilon$ in the normal bundle of $B$ considered as a submanifold of $A$. By the slice neighborhood theorem, for small enough $\epsilon$ there is a neighborhood of $O$ in $Z$, denoted as $V$, such that the exponential map gives a diffeomorphism from $\nu^{\leq \epsilon}(Z/O)$ to $V$. There is also a neighborhood of $V$ in $M$, denoted as $U$, such that the exponential map gives a diffeomorphism from $\nu^{\leq \epsilon}(M/V)$ to $U$. Since both of the normal bundles are trivial, we can identify $\nu^{\leq \epsilon}(Z/O)$ with $S^1\times D^1$ and $\nu^{\leq \epsilon}(M/V)$ with $D^2\times S^1\times D^1$. Through Orlik and Raymond's discussion of isotropic representations (c.f. \cite{OR70}, p534), if we let $T^2$ act on $D^2\times S^1\times D^1$ via rotation, i.e. for $(e^{i\theta_1},e^{i\theta_2})\in T^2$ and $ (re^{i\varphi},\theta,t)\in D^2\times S^1\times D^1$, the action is given by $(e^{i\theta_1},e^{i\theta_2}).(re^{i\varphi},\theta,t)= (re^{i(\varphi+\theta_1)},\theta+\theta_2,t)$, then the exponential map gives an equivariant diffeomorphism from $D^2\times S^1\times D^1$ to $U$. We can use this diffeomorphism to pull the metric on $U$ to a metric on $D^2\times S^1\times D^1$.\par
We now write down this metric explicitly using the coordinates $(r,\varphi,t,\theta)$ on $(D^2-\{0\})\times S^1\times D^1$. Since $(r,t)$ gives the Fermi coordinates over the orbit space, the metric there can be written as $g^\sigma:=dr^2+\mu^2(r,t)dt^2$. The metric along orbits is given by $g^\Omega:=a^2d\varphi^2+2bd\varphi d\theta+d^2d\theta^2$.
Let $X=\partial_r+c_{11}(r,t)\partial_\varphi+c_{12}(r,t)\partial_\theta$ and $Y=\partial_t+c_{21}(r,t)\partial_\varphi+c_{22}(r,t)\partial_\theta$ be the horizontal lifts of the vector fields $\partial_r,\partial_t$, and put the coefficients into a matrix $C:=\begin{bmatrix}
        c_{11}&c_{12}\\
        c_{21}&c_{22}
    \end{bmatrix}$. The metric on $D^2\times S^1\times D^1$ is then given by the following matrix with coefficients depending on $(r,t)$:
    \begin{align}\label{eq:2}
    \begin{bmatrix}
        g^\Sigma+Cg^\Omega C^T&-Cg^\Omega\\
        -g^\Omega C^T&g^\Omega
    \end{bmatrix}
\end{align}
\begin{lemma}\label{2.1}
 Let $R^{even}$ denote the space of real valued functions on $[0,\epsilon)\times (-\epsilon,\epsilon)$ that extends smoothly to $(-\epsilon,\epsilon)^2$ as an even function in the first variable, then the coefficients $(\mu, a,b,d, C)$ define a smooth metric on $D^2\times S^1\times D^1$ if and only if the following conditions are satisfied:
    \begin{enumerate}
        \item $\mu^2, r^{-1}c_{11}, r^{-1}c_{12},c_{21},c_{22}, a^2,b,d^2\in R^{even}$
        \item For all values of $t$, we have $a^2(0,t)=0$, $\lim_{r\to 0^+}\frac{1}{r^2}a^2(r,t)=1$, $b(0,t)=0$, $\mu(0,t)=1$, $d^2(0,t)> 0$.
    \end{enumerate}
\end{lemma}
\begin{proof}
    We need to change to a coordinate chart that covers $\{0\}\times S^1\times D^1$. Let the new coordinate chart be given by $(x=r\cos\varphi,y=r\sin\varphi,t=t,\theta=\theta)$, then the components of the metric in the new chart is given in the following table:

    \begin{align*}
    g^{xx}&=\frac{x^2}{r^2}(1+(c_{11}^2a^2+2c_{11}c_{12}b+c_{12}^2d^2)) 
    -\frac{2}{r^3}xy(a^2c_{11}+bc_{12})
    +\frac{y^2}{r^4}a^2\\
    g^{yy}&=\frac{y^2}{r^2}(1+(c_{11}^2a^2+2c_{11}c_{12}b+c_{12}^2d^2))    
    +\frac{2}{r^3}xy(a^2c_{11}+bc_{12})
    +\frac{x^2}{r^4}a^2\\
    g^{xy}&=2[\frac{xy}{r^2}(1+(c_{11}^2a^2+2c_{11}c_{12}b+c_{12}^2d^2))
    -\frac{1}{r^3}(x^2-y^2)(a^2c_{11}+bc_{12})
    -\frac{xy}{r^4}a^2]\\
    g^{xt}&=2\frac{x}{r}(c_{11}c_{21}a^2+(c_{12}c_{21}+c_{11}c_{22})b+c_{12}c_{22}d^2)+\frac{4}{r^2}y(a^2c_{21}+bc_{22})\\
    g^{yt}&=2\frac{y}{r}(c_{11}c_{21}a^2+(c_{12}c_{21}+c_{11}c_{22})b+c_{12}c_{22}d^2)-\frac{4}{r^2}x(a^2c_{21}+bc_{22})\\
    g^{x\theta}&=-\frac{2b}{r^2}y-\frac{2}{r}x(bc_{11}+d^2c_{12})\\
    g^{y\theta}&=\frac{2b}{r^2}x-\frac{2}{r}y(bc_{11}+d^2c_{12})\\
    g^{tt}&=\mu^2+(c_{21}^2a^2+2c_{21}c_{22}b+c_{22}^2d^2)\\
    g^{t\theta}&=-2(bc_{21}+d^2c_{22})\\
    g^{\theta\theta}&=d^2
\end{align*}
    We recall that a radial function $f(|z_1|)$ on $D^2$ is smooth if and only if $f$ extends to a smooth even function over a neighborhood of $0\in \R$.  If $(\mu, a,b,d, C)$ satisfies the conditions mentioned in the lemma, then one can verify the smoothness of the functions in the table directly. Conversely, we assume the function in the table are smooth, and we need to deduce the conditions for $(\mu, a,b,d, C)$ in the lemma.\par
    By restricting these components of the metric tensor to the lines $\gamma_{(t,\theta)}:s\mapsto (s,0,t,\theta)$ inside the hypersurface where $y=0$, we can see that $(c_{11}^2a^2+2c_{11}c_{12}b+c_{12}^2d^2)$, $\frac{a^2}{r^2}$, $\frac{1}{r}(a^2c_{11}+bc_{12})$, $(\mu^2+(c_{21}^2a^2+2c_{21}c_{22}b+c_{22}^2d^2))$, $(bc_{21}+d^2c_{22})$, $d^2$ have smooth even extensions, and $(c_{11}c_{21}a^2+(c_{12}c_{21}+c_{11}c_{22})b+c_{12}c_{22}d^2)$, $\frac{1}{r}(a^2c_{21}+bc_{22})$, $\frac{b}{r}$, $(bc_{11}+d^2c_{12})$ have smooth odd extensions. In particular, $a^2,b,d^2$ all admit smooth even extensions, and $a^2(0,t)=b(0,t)=0$. Since $a^2$ is the length of a Killing field and a Killing field cannot vanish up to second order, we see $\lim_{r\to 0^+}\frac{1}{r^2}a^2(r,t)>0$. Since $d^2$ is the length of a Killing field that does not vanish at $r=0$, we see $d^2(0,t)>0$.\par
    We now show that $r^{-1}c_{11}, r^{-1}c_{12}\in R^{even}$. Since $c_{11}a^2+c_{12}b$, $c_{11}b+c_{12}d^2$ both admit smooth extensions to odd functions in $r$, after multiplying them by smooth even functions we see $(a^2d^2-b^2)c_{11}, (a^2d^2-b^2)c_{12}$ admit smooth odd extensions. From the order of vanishing for $a^2,b,d^2$ we see this implies $r^2c_{11},r^2c_{12}\in rR^{even}$. We still need to show $\lim_{r\to 0^+}rc_{11}(r,t)=\lim_{r\to 0^+}rc_{12}(r,t)= 0$. If $\lim_{r\to 0^+}rc_{12}(r,t)\neq 0$, then since $c_{11}b\in rR^{even}$, we see $c_{11}b+c_{12}d^2$ cannot be bounded at $r=0$. We thus get $r^{-1}c_{12}\in R^{even}$. If $\lim_{r\to 0^+}rc_{11}(r,t)\neq 0$, then $\frac{xy}{r^3}(a^2c_{11}+bc_{12})$ cannot be smooth, which prevents $g^{xx}$ to be smooth as it cannot be cancelled by other terms in the expression for $g^{xx}$. We thus get $r^{-1}c_{11}\in R^{even}$.\par
    The proof for $c_{21},c_{22}\in R^{even}$ is similar: Since $c_{21}a^2+c_{22}b$, $c_{21}b+c_{22}d^2$ both admit smooth extensions to even functions, we get $(a^2d^2-b^2)c_{21}, (a^2d^2-b^2)c_{22}\in R^{even}$, which implies that $r^2c_{21},r^2c_{22}\in R^{even}$. If $\lim_{r\to 0^+}r^2c_{22}(r,t)\neq 0$, then since $c_{21}b\in R^{even}$, we see $c_{21}b+c_{22}d^2$ cannot be bounded at $r=0$, so $c_{22}\in R^{even}$. If $\lim_{r\to 0^+}r^2c_{12}(r,t)\neq 0$, then $\lim_{r\to 0^+}(a^2c_{21}+bc_{22})\neq 0$, which would contradict $(a^2c_{21}+bc_{22})\in rR^{even}$.\par
    We thus have $c_{11}^2a^2+2c_{11}c_{12}b+c_{12}^2d^2\in r^2R^{even}$. Combining with the assumption that $g^{xx}=\frac{x^2}{r^2}(1+(c_{11}^2a^2+2c_{11}c_{12}b+c_{12}^2d^2)) 
    -\frac{2}{r^3}xy(a^2c_{11}+bc_{12})
    +\frac{y^2}{r^4}a^2$ is smooth, we see $\frac{x^2}{r^2}+\frac{y^2}{r^2}a^2$ must be smooth, which implies that $\lim_{r\to 0^+}\frac{1}{r^2}a^2(r,t)=1$. From the smoothness of $g^{tt}=\mu^2+(c_{21}^2a^2+2c_{21}c_{22}b+c_{22}^2d^2)$ we get $\mu^2\in R^{even}$, and $\mu(0,t)=1$ follows from the assumption that  $(r,t)$ gives the Fermi coordinates on the orbit space.
\end{proof}
We now deal with the fixed points. As before we start by describing the torus action on a slice neighborhood. Let $p$ be a fixed point, then up to a reparametrization of the torus we can assume the isotropy subgroups of the two adjacent codimension-two strata $Z$, $Z'$ are $S(1,0)$, $S(0,1)$ respectively. Let the quotients of $Z,Z'$ be denoted as $e,e'\subseteq \Sigma$. The slice neighborhood theorem then guarantees that there is a small enough neighborhood $U'\ni p$ and an equivariant diffeomorphism $\psi:D^2\times D^2\to U'$, where the torus acts on $D^2\times D^2\subseteq T_pM$ as rotations: 
 for $(e^{i\theta_1},e^{i\theta_2})\in T^2$ and$(re^{i\varphi},te^{i\theta})\in D^2\times D^2$, $(e^{i\theta_1},e^{i\theta_2}).(re^{i\varphi},te^{i\theta})= (re^{i(\varphi+\theta_1)},te^{i\theta+\theta_2})$. We can assume $\psi$ takes the subset where $r=0$ to $\bar{Z}$ and the subset where $t=0$ to $\bar{Z}'$. Introducing functions $(\mu,a^2,b,d^2,C)$ in $(r,t)$ as before, the pullback metric on $(D^2-\{0\})\times (D^2-\{0\})$ will take the same form as \ref{eq:2}.\par
\begin{lemma}\label{2.2}
 Let $R^{even}$ denote the space of real valued functions on $[0,\epsilon)\times [0,\epsilon)$ that extends smoothly to $(-\epsilon,\epsilon)^2$ as an even function, i.e. a function invariant under the $\Z_2\times \Z_2$-action flipping the coordinates, then the coefficients $(\mu, a,b,d, C)$ define a smooth metric on $D^2\times S^1\times D^1$ if and only if the following conditions are satisfied:
    \begin{enumerate}
        \item $\mu^2, r^{-1}c_{11}, r^{-1}c_{12},t^{-1}c_{21},t^{-1}c_{22}\in R^{even}$
        \item $a^2\in r^2+r^4R^{even}, d^2\in t^2\mu^2(r,0)+t^4R^{even}, b\in r^2t^2R^{even}$
        \item $\mu(0,t)=1$ for all t.
    \end{enumerate}
\end{lemma}
\begin{proof}
    We need to change to a coordinate chart that covers the entire $D^2\times D^2$. Let the new coordinate chart be given by $(x=r\cos\varphi,y=r\sin\varphi,z=t\cos\theta,w=t\sin\theta)$, then the components of the metric in the new chart is given in the following table:
    \begin{align*}
    g^{xx}&=\frac{x^2}{r^2}(1+(c_{11}^2a^2+2c_{11}c_{12}b+c_{12}^2d^2)) 
    -\frac{2}{r^3}xy(a^2c_{11}+bc_{12})
    +\frac{y^2}{r^4}a^2\\
    g^{yy}&=\frac{y^2}{r^2}(1+(c_{11}^2a^2+2c_{11}c_{12}b+c_{12}^2d^2))    
    +\frac{2}{r^3}xy(a^2c_{11}+bc_{12})
    +\frac{x^2}{r^4}a^2\\
    g^{xy}&=2[\frac{xy}{r^2}(1+(c_{11}^2a^2+2c_{11}c_{12}b+c_{12}^2d^2))
    -\frac{1}{r^3}(x^2-y^2)(a^2c_{11}+bc_{12})
    -\frac{xy}{r^4}a^2]\\
    g^{xz}&=\frac{1}{t}z[2\frac{x}{r}(c_{11}c_{21}a^2+(c_{12}c_{21}+c_{11}c_{22})b+c_{12}c_{22}d^2)+\frac{4}{r^2}y(a^2c_{21}+bc_{22})]\\
    &-\frac{1}{t^2}w[-\frac{2b}{r^2}y-\frac{2}{r}x(bc_{11}+d^2c_{12})]\\
    g^{yz}&=\frac{1}{t}w[2\frac{y}{r}(c_{11}c_{21}a^2+(c_{12}c_{21}+c_{11}c_{22})b+c_{12}c_{22}d^2)-\frac{4}{r^2}x(a^2c_{21}+bc_{22})]\\
    &-\frac{1}{t^2}z[\frac{2b}{r^2}x-\frac{2}{r}y(bc_{11}+d^2c_{12})]\\
    g^{xw}&=\frac{1}{t}z[2\frac{x}{r}(c_{11}c_{21}a^2+(c_{12}c_{21}+c_{11}c_{22})b+c_{12}c_{22}d^2)+\frac{4}{r^2}y(a^2c_{21}+bc_{22})]\\
    &+\frac{1}{t^2}z[-\frac{2b}{r^2}y-\frac{2}{r}x(bc_{11}+d^2c_{12})]\\
    g^{yw}&=\frac{1}{t}w[2\frac{y}{r}(c_{11}c_{21}a^2+(c_{12}c_{21}+c_{11}c_{22})b+c_{12}c_{22}d^2)-\frac{4}{r^2}x(a^2c_{21}+bc_{22})]\\
    &+\frac{1}{t^2}z[\frac{2b}{r^2}x-\frac{2}{r}y(bc_{11}+d^2c_{12})\\
    g^{zz}&=\frac{1}{t^2}z^2(\mu^2+(c_{21}^2a^2+2c_{21}c_{22}b+c_{22}^2d^2))+\frac{1}{t^4}w^2d^2+\frac{2}{t^3}zw(bc_{21}+d^2c_{22})\\
    g^{ww}&=\frac{1}{t^2}w^2(\mu^2+(c_{21}^2a^2+2c_{21}c_{22}b+c_{22}^2d^2))+\frac{1}{t^4}w^2d^2-\frac{2}{t^3}zw(bc_{21}+d^2c_{22})\\
    g^{zw}&=\frac{2}{t^2}zw(\mu^2+(c_{21}^2a^2+2c_{21}c_{22}b+c_{22}^2d^2))+\frac{2}{t^4}zwd^2+\frac{2}{t^3}(z^2-w^2)(bc_{21}+d^2c_{22})
\end{align*}
     We recall that a radial function $f(|z_1|,|z_2|)$ on $D^2\times D^2$ is smooth if and only if $f$ extends to a smooth function over a neighborhood of $0\in \R^2$ invariant under the reflections in both coordinates.  If $(\mu, a,b,d, C)$ satisfies the conditions mentioned in the lemma, then one can verify the smoothness of the functions in the table directly. Conversely, we assume the function in the table are smooth, and we need to deduce the conditions for $(\mu, a,b,d, C)$ in the lemma.\par
    By restricting these components of the metric tensor to the embedded disks where $y=w=0$, we can see that $(c_{11}^2a^2+2c_{11}c_{12}b+c_{12}^2d^2)$, $\frac{a^2}{r^2}$, $\frac{1}{r}(a^2c_{11}+bc_{12})$, $(\mu^2+(c_{21}^2a^2+2c_{21}c_{22}b+c_{22}^2d^2))$, $\frac{1}{t}(bc_{21}+d^2c_{22})$, $\frac{d^2}{t^2}$ are functions inside $R^{even}$, and $(c_{11}c_{21}a^2+(c_{12}c_{21}+c_{11}c_{22})b+c_{12}c_{22}d^2)$, $\frac{1}{r}(a^2c_{21}+bc_{22})$, $\frac{b}{rt}$, $\frac{1}{t}(bc_{11}+d^2c_{12})$ are functions inside $rtR^{even}$. In particular, we have $a^2\in r^2R^{even},b\in r^2t^2R^{even},d^2\in t^2R^{even}$. \par
    We then show that $r^{-1}c_{11}, r^{-1}c_{12}\in R^{even}$, and the argument for $t^{-1}c_{21}, t^{-1}c_{22}\in R^{even}$ will be exactly the same. The argument will be parallel to the argument for codimension-two strata. Since $c_{11}a^2+c_{12}b$, $c_{11}b+c_{12}d^2$ are both in $rR^{even}$, after multiplying them by smooth even functions we see $(a^2d^2-b^2)c_{11}, (a^2d^2-b^2)c_{12}$ are in $rR^{even}$ too. From the order of vanishing for $a^2,b,d^2$ we see this implies $r^2c_{11},r^2c_{12}\in rR^{even}$. If $\lim_{r\to 0^+}rc_{12}(r,t)\neq 0$, then since $c_{11}b\in rR^{even}$, we see $c_{11}b+c_{12}d^2$ cannot be bounded at $r=0$. We thus get $r^{-1}c_{12}\in R^{even}$. If $\lim_{r\to 0^+}rc_{11}(r,t)\neq 0$, then once again $\frac{xy}{r^3}(a^2c_{11}+bc_{12})$ cannot be smooth, contradicting the smoothness of $g^{xx}$, and we thus get $r^{-1}c_{11}\in R^{even}$.\par
    We then have $c_{11}^2a^2+2c_{11}c_{12}b+c_{12}^2d^2\in r^2R^{even}$. Combining with the assumption that $g^{xx}=\frac{x^2}{r^2}(1+(c_{11}^2a^2+2c_{11}c_{12}b+c_{12}^2d^2)) 
    -\frac{2}{r^3}xy(a^2c_{11}+bc_{12})
    +\frac{y^2}{r^4}a^2$ is smooth, we see $\frac{x^2}{r^2}+\frac{y^2}{r^2}a^2$ must be smooth, which implies that $a^2\in r^2+r^4R^{even}$. Similarly from the smoothness of $g^{zz}=\frac{1}{t^2}z^2(\mu^2+(c_{21}^2a^2+2c_{21}c_{22}b+c_{22}^2d^2))+\frac{1}{t^4}w^2d^2+\frac{2}{t^3}zw(bc_{21}+d^2c_{22})$ we get $d^2\in t^2\mu^2(r,0)+ t^4R^{even}$, and $\mu(0,t)=1$ follows from the usage of Fermi coordinates.
\end{proof}

\section{Integrability of the horizontal distribution}\label{3}

\begin{definition}
    Let $T^2$ act on the Riemannian manifold $(M,g)$ by isometry. The action is geometrically polar\footnote{It is referred to as surface orthogonal in \cite{Da06}} if it satisfies one of the following equivalent conditions: 
    \begin{enumerate}
        \item there is a section $\sigma:\Sigma\to M$ everywhere perpendicular to the orbits.
        \item The two dimensional distribution (which we refer to as the horizontal distribution) perpendicular to the $T^2$ orbits is integrable.
    \end{enumerate} 
\end{definition}
The Einstein equations impose strong constraints on the geometry of $M$. In particular, for $X$ a vector perpendicular to the $T^2$ orbits (which we refer to as a horizontal vector) and $V$ an action field, we must have $\Ric{X}{V}=0$. This allows us to obtain the following result, which was also proven in \cite{Da06}:
\begin{theorem}\label{3.2}
If $T^2$ acts by isometries on a simply connected closed Einstein 4-manifold, then the action is geometrically polar.
\end{theorem}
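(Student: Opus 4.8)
\emph{Overview.} I would prove the statement by showing that the Einstein equation forces the horizontal distribution $\mathcal H$ (the orthogonal complement of the orbit tangent spaces) to be integrable, and that integrability of $\mathcal H$ is exactly what produces the geometric section. So the plan has two halves: a soft reduction of ``geometrically polar'' to ``$\mathcal H$ integrable'', and a hard analytic step deducing integrability from $\Ric{X}{V}=0$.

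\emph{Step 1: reduction to integrability.} Over the principal stratum $M^{0}$ the quotient $M^{0}\to\Sigma^{0}$ ($\Sigma^{0}$ the interior of the polygon) is a principal $T^{2}$-bundle, and $\mathcal H$ is a principal connection on it. If $V_{1},V_{2}$ are the generating Killing fields and $X,Y$ are horizontal, then, dropping the terms $Xg(Y,V_{i})=Yg(X,V_{i})=0$ and using the Killing equation, $g([X,Y],V_{i})=-g(Y,\nabla_{X}V_{i})+g(X,\nabla_{Y}V_{i})=-2g(\nabla_{X}V_{i},Y)$, so $\mathcal H$ is integrable iff the curvature of this connection vanishes. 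Since $\Sigma^{0}$ is contractible and the fibres are compact, horizontal lifts of paths are complete and have trivial holonomy, so a flat connection yields a global section $\sigma\colon\Sigma^{0}\to M^{0}$ perpendicular to the orbits. To extend $\sigma$ across the singular strata I would use the normal forms of Section \ref{2}: there the horizontal lifts are $X=\partial_{r}+c_{11}\partial_{\varphi}+c_{12}\partial_{\theta}$, $Y=\partial_{t}+c_{21}\partial_{\varphi}+c_{22}\partial_{\theta}$ with $[X,Y]=(\partial_{r}c_{21}-\partial_{t}c_{11})\partial_{\varphi}+(\partial_{r}c_{22}-\partial_{t}c_{12})\partial_{\theta}$, so integrability says $c_{11}dr+c_{21}dt$ and $c_{12}dr+c_{22}dt$ are closed; near a singular stratum the relevant region is simply connected, so these forms are exact, $c_{11}dr+c_{21}dt=d\alpha_{1}$, $c_{12}dr+c_{22}dt=d\alpha_{2}$, and the equivariant change of parameter $(re^{i\varphi},\theta)\mapsto(re^{i(\varphi+\alpha_{1})},\theta+\alpha_{2})$ kills the $c_{ij}$, the parity conditions of Section \ref{2} making the $\alpha_{i}$ admissible. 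In the new coordinates $\sigma$ is the zero section, hence smooth up to the stratum, and reflecting $\sigma(\Sigma)$ by $\Z_{2}\times\Z_{2}\subseteq T^{2}$ gives the closed embedded $\tilde\Sigma$.

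\emph{Step 2: the Einstein identity.} Let $\eta_{i}=g(V_{i},\cdot)$. Since $V_{i}$ is Killing, $\delta\eta_{i}=0$ and the Bochner identity gives $\delta d\eta_{i}=2\,\mathrm{Ric}^{\flat}(V_{i})$, so the Einstein equation together with the fact that $\eta_{i}$ annihilates $\mathcal H$ gives $\delta(d\eta_{i})|_{\mathcal H}=0$; this is precisely the constraint $\Ric{X}{V_{i}}=0$ rewritten so that the metric structure is exposed on the left side. Writing $g_{ij}=g(V_{i},V_{j})$ and $\theta^{j}$ for the connection $1$-forms ($\theta^{j}(V_{k})=\delta^{j}_{k}$, $\theta^{j}|_{\mathcal H}=0$), one has $\eta_{i}=\sum_{j}g_{ij}\theta^{j}$, hence $d\eta_{i}=\sum_{j}g_{ij}\,\Omega^{j}+\sum_{j}dg_{ij}\wedge\theta^{j}$ where $\Omega^{j}=d\theta^{j}$ is the purely horizontal curvature form, the second sum lies in $\mathcal H^{*}\wedge\mathcal V^{*}$, and the vertical-vertical part vanishes because $[V_{j},V_{k}]=0$. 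Taking the horizontal component of $\delta(d\eta_{i})=0$ then produces an O'Neill/Yang--Mills--Higgs type identity balancing a combination of $\delta_{\Sigma}\Omega^{j}$ against first derivatives of the fibre data $g_{ij}$.

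\emph{Step 3 and the main obstacle.} On the $2$-dimensional quotient, each basic curvature form is $\Omega^{j}=h^{j}\,\mathrm{vol}_{\mathcal H}$ for a function $h^{j}$ on $\Sigma^{0}$, and a coclosed $2$-form on a surface is a constant multiple of the volume form; so the identity of Step 2 determines the $h^{j}$ up to integration constants once the conformal corrections coming from the orbit volume $v=\sqrt{\det(g_{ij})}$ and the mean curvature of the orbits are accounted for. The asymptotics of Section \ref{2} near the singular strata --- where $v$ degenerates like $r$ along a codimension-one stratum while $\Omega^{j}$ stays bounded --- should force those constants to vanish, since a nonzero constant would obstruct smooth closure of the metric, giving $\Omega^{j}\equiv 0$ and hence integrability. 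I expect the genuinely hard part to be exactly here: deriving the precise first-order system for the $\Omega^{j}$ from $\Ric{X}{V_{i}}=0$ (this needs the full O'Neill formalism for the \emph{non}-minimal $T^{2}$-fibration, carrying along the mean curvature of the orbits and the derivatives of $g_{ij}$), and then matching it against the boundary asymptotics sharply enough to kill the integration constants without spurious boundary contributions --- equivalently, ruling out a nonzero ``harmonic'' piece of the connection curvature that the contractible orbit surface would otherwise allow but that the degeneration of the metric at the singular set obstructs.
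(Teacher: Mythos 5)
Your proposal follows essentially the same architecture as the paper's proof: reduce ``geometrically polar'' to the vanishing of the vertical part of $[X,Y]$ for basic horizontal lifts, derive from $\Ric{X}{V_i}=0$ a first-order conservation law for the connection curvature on the two-dimensional quotient, and then kill the resulting integration constants using the degeneration of the orbits at $\partial\Sigma$. The paper obtains the conservation law by a direct Koszul-formula computation of $\Ric{X}{V_i}$ and $\Ric{Y}{V_i}$, arriving at
\[
\nabla_\Sigma\Bigl(\frac{\sqrt{\det(g^\Omega_{ij})}}{\sqrt{\det(g^\Sigma)}}\,\metric{[X,Y]}{V_i}\Bigr)=0,
\]
so the precise conserved quantity is the curvature pairing weighted by the ratio of orbit volume to horizontal area element; your Bochner route $\delta d\eta_i=2\,\mathrm{Ric}^{\flat}(V_i)$ for the Killing one-forms is a legitimate, arguably more conceptual way to reach the same identity, and the ``conformal corrections'' you anticipate from the orbit volume and mean curvature are exactly this weight (note it is the weighted combination $\sqrt{\det(g^\Omega_{ij})}\sum_j g_{ij}h^j$ that is constant, not each $h^j$ separately). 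Your endgame coincides with the paper's: $\metric{[X,Y]}{V_i}$ stays bounded near a codimension-one stratum because $[X,Y]=(\partial_rc_{21}-\partial_tc_{11})\partial_\varphi+(\partial_rc_{22}-\partial_tc_{12})\partial_\theta$ with all derivatives of the $c_{ij}$ bounded, while $\det(g^\Omega_{ij})\to 0$ there, so the constant vanishes and $[X,Y]\equiv 0$. The one piece you have explicitly deferred --- carrying out the codifferential computation with the mean-curvature cross terms --- is the only real content of the paper's proof, so to complete the argument you must either execute it or cite the corresponding O'Neill/Kaluza--Klein identity; the remaining steps of your outline, including the holonomy argument over the contractible $\Sigma^0$ and the extension of the section across the singular strata (which the paper treats more briskly), are sound.
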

\begin{proof}
We first compute the Ricci curvature for a $T^2$-invariant metric and observe that $\Ric{X}{V}=0$ implies that the O'Neill's A-tensor (c.f. \cite{On66}) over the interior of orbit space is constant. Then we use the smoothness conditions to show the O'Neill's A-tensor extends to a continuous function over the entire orbit space that vanishes along the image of singular strata. This implies that the O'Neill's A-tensor vanishes everywhere, so the horizontal distribution is integrable. \par
Choose a coordinate chart on the orbit space polygon $\Sigma$ so that locally the metric $g_\Sigma$ on $\Sigma$ is given by $ds^2=dx^2+\mu^2dy^2$. We can lift $\partial_x,\partial_y$ to horizontal vector fields $X,Y$ on the principal stratum of $M$. Let $V_1,V_2$ denote two linearly independent Killing fields on $M$. We observe that $[X,Y]$ is a vertical vector and the action is polar iff $[X,Y]=0$. Since $X,Y$ are $T^2$ equivariant, we have $[X,V_1]=[X,V_2]=[Y,V_1]=[Y,V_2]=0$. Since $T^2$ is abelian, we also have $[V_1,V_2]=0$. \par
With the notation set up, a straightforward computation shows that the Ricci curvature is given by: 
    \begin{align*}
    \Ric{Y}{V_1}&=\frac{\mu}{2\sqrt{\det(g^\Omega_{ij})}}\partial_x(\frac{\sqrt{\det(g^\Omega_{ij})}}{\mu}\metric{[X,Y]}{V_1})\\
\Ric{X}{V_1}&=-\frac{1}{2\mu\sqrt{\det(g^\Omega_{ij})}}\partial_y(\frac{\sqrt{\det(g^\Omega_{ij})}}{\mu}\metric{[X,Y]}{V_1})\\
\Ric{Y}{V_2}&=\frac{\mu}{2\sqrt{\det(g^\Omega_{ij})}}\partial_x(\frac{\sqrt{\det(g^\Omega_{ij})}}{\mu}\metric{[X,Y]}{V_2})\\
\Ric{X}{V_2}&=-\frac{1}{2\mu\sqrt{\det(g^\Omega_{ij})}}\partial_y(\frac{\sqrt{\det(g^\Omega_{ij})}}{\mu}\metric{[X,Y]}{V_2})
\end{align*}
We observe these formula can be written in a more compact form. Let $J$ denote the complex structure on $\Sigma$ induced by the metric, and make the following definitions:
\begin{align*}
\alpha:=\sqrt{\det(g^\Omega_{ij})}\metric{[X,\frac{1}{\mu}Y]}{V_1}\\
\beta:=\sqrt{\det(g^\Omega_{ij})}\metric{[X,\frac{1}{\mu}Y]}{V_2}
\end{align*}
The Ricci curvature can then be expressed as follows:
\begin{align*}
    \Ric{Y}{V_1}&=-\frac{1}{2\sqrt{\det(g^\Omega_{ij})}}\metric{d\alpha}{JY}&
\Ric{X}{V_1}&=-\frac{1}{2\sqrt{\det(g^\Omega_{ij})}}\metric{d\alpha}{JX}\\
\Ric{Y}{V_2}&=-\frac{1}{2\sqrt{\det(g^\Omega_{ij})}}\metric{d\beta}{JY}&
\Ric{X}{V_2}&=-\frac{1}{2\sqrt{\det(g^\Omega_{ij})}}\metric{d\beta}{JX}
\end{align*}

Since the metric is Einstein, we have $\begin{bmatrix}
\Ric{X}{V_1}&\Ric{X}{V_2}\\
\Ric{Y}{V_1}&\Ric{Y}{V_2}
\end{bmatrix}=0$. Thus $\alpha,\beta$ are constants, as the Einstein equations imply that their gradients vanish over the principal stratum:
\begin{align}\label{eq:3}
\nabla_\Sigma\alpha=
\nabla_\Sigma\beta=0
\end{align}
\par
On the other hand, in the slice neighborhoods described in the previous section, let $X,Y$ be the horizontal lifts of $\partial_r,\partial_t$ and $V_1=\partial_\varphi, V_2=\partial_\theta$, then $[X,Y]=(\partial_rc_{21}-\partial_tc_{11})\partial_\varphi+(\partial_rc_{22}-\partial_tc_{12})\partial_\theta$. Since $c_{ij}$ have all derivatives bounded, we then observe that $\metric{[X,Y]}{V_1}$, $\metric{[X,Y]}{V_2}$ are bounded near the singular strata, therefore the constant functions 
$\alpha,\beta$ over the principal stratum extends to continuous functions that vanish on the singular strata, so they vanish in the slice neighborhood. We then have $[X,Y]=0$ in a tubular neighborhood of the union of all singular strata, which implies that $[X,Y]=0$ everywhere. 
\end{proof}
\begin{remark}
The quantities $\alpha,\beta$ are called the twist scalars in Proposition 4.12 in \cite{Da06}, and seem to be well-known to physicists (c.f. Chapter 19 of \cite{SKMHH03}). Dammerman defined them in a slightly different way: If $\omega_1$, $\omega_2$ are the one forms dual to the Killing fields $V_1,V_2$, then $\alpha=*(\omega_2\wedge\omega_1\wedge d\omega_1)$ and $\beta=*(\omega_1\wedge\omega_2\wedge d\omega_2)$. Using his expression, Dammerman showed that the O'Neill's A-tensor vanishes as long as there is a fixed point.
\end{remark}
The integrability of horizontal distribution also holds if instead of requiring $g$ to be Einstein, we require it to be a Ricci soliton.
\begin{proposition}
    If $T^2$ acts by isometries on a simply connected closed  4-dimensional Ricci soliton, then the action is geometrically polar.
\end{proposition}
\begin{proof}
    By \cite{pe02}, a closed Ricci soliton must be a gradient soliton, i.e. it satisfies $\text{Ric}_g+\text{Hess}f=\lambda g$ for some potential function $f$. If $\text{Hess}f$ is invariant under the action of a compact connected Lie group, then $f$ must be invariant too, as it must be constant over the orbit of any closed one parameter subgroup. For $X,Y$ and $V,W$ as in previous paragraph,
    we have \[\begin{bmatrix}
        \text{Hess }f(X,V_1)& \text{Hess }f(X,V_2)\\
        \text{Hess }f(Y,V_1)& \text{Hess }f(Y,V_2)
    \end{bmatrix}=\begin{bmatrix}
        -\frac{1}{2\mu^2}\metric{[X,Y]}{V_1}\partial_y(f)&-\frac{1}{2\mu^2}\metric{[X,Y]}{V_2}\partial_y(f)\\
        \frac{1}{2}\metric{[X,Y]}{V_1}\partial_x(f)&\frac{1}{2}\metric{[X,Y]}{V_2}\partial_x(f)
    \end{bmatrix}\]
    Hence the Ricci curvatures give us the following conditions \begin{align*}
    \frac{\mu}{2e^{-f}\sqrt{\det(g^\Omega_{ij})}}\partial_x(\frac{\sqrt{\det(g^\Omega_{ij})}}{\mu}\metric{[X,Y]}{V_1}e^{-f})=0\\
\frac{1}{2e^{-f}\mu\sqrt{\det(g^\Omega_{ij})}}\partial_y(\frac{\sqrt{\det(g^\Omega_{ij})}}{\mu}\metric{[X,Y]}{V_1}e^{-f})=0\\
\frac{\mu}{2e^{-f}\sqrt{\det(g^\Omega_{ij})}}\partial_x(\frac{\sqrt{\det(g^\Omega_{ij})}}{\mu}\metric{[X,Y]}{V_2}e^{-f})=0\\
\frac{1}{2e^{-f}\mu\sqrt{\det(g^\Omega_{ij})}}\partial_y(\frac{\sqrt{\det(g^\Omega_{ij})}}{\mu}\metric{[X,Y]}{V_2}e^{-f})=0
\end{align*}
which then, by the same reason as in the Einstein case, implies that $\alpha,\beta$ are both constants over the principal stratum. Since $\alpha,\beta$ extends to continuous function that vanishes on the singular strata, they vanishes everywhere, hence the horizontal distribution is integrable.
\end{proof}
     



\section{\texorpdfstring{Dirichlet eigenfunctions as orbit volume}{Dirichlet eigenfunctions as orbit volume}}\label{4}
Now we restrict our attention to geometrically polar actions. In this case, the horizontal distribution is integrable, and the integral submanifolds give totally geodesic embeddings of $\Sigma$ into $M$. What we need are the following consequences of the Einstein equation. These results can be found in either \cite{Da06} or the author's thesis.\par
Let $B_X(V_i,V_j):=-\metric{\nabla_{X}V_i}{V_j}$ denote the second fundamental form of the orbit, and $H:=g^{ij}_\Omega\nabla_{V_i}V_j$ denote the mean curvature. We use $\phi$ to denote the orbit volume as a function on the orbit space. We shall use $\Delta_\Sigma, \text{Hess}_\Sigma$ to denote the Laplacian and Hessian operators induced by the metric on the orbit space. We have the following curvature expressions for the Ricci curvature and scalar curvature for a metric invariant under a geometrically polar $T^2$-action: 
\begin{align}
\Ric{V_i}{V_j}&=-\frac{1}{2}\Delta_{\Sigma}\metric{V_i}{V_j}+\frac{1}{2}\metric{H}{\nabla_\Sigma \metric{V_i}{V_j}}+2g^{st}_\Sigma \metric{(\nabla_{X_s}V_i)^\bot}{(\nabla_{X_t}V_j)^\bot}\\
\Ric{X}{Y}&=\text{Ric}_\Sigma(X,Y)-\phi^{-1}\text{Hess}_\Sigma\phi(X,Y)+\metric{H}{X}\metric{H}{Y}-\metric{B_X}{B_Y}\\
\Ric{V_i}{V_j}g^{ij}_\Omega&=-\phi^{-1}\Delta_{\Sigma}\phi\\
\Ric{X_s}{X_t}g^{st}_\Sigma&=\text{Scal}_\Sigma-\phi^{-1}\Delta_{\Sigma}\phi+|H|^2-|B|^2\\
\text{Scal}&=\text{Scal}_\Sigma-2\phi^{-1}\Delta_{\Sigma}\phi+|H|^2-|B|^2
\end{align}
It follows that by looking at the orbit-wise trace of the Ricci curvature and keeping in mind that $\phi$ is a non-negative function that vanishes only on the boundary of $\Sigma$:
\begin{proposition}
    Assume $\text{Ric}=\Lambda g$. Let $\phi$ be the orbit-volume function defined over the orbit space $\Sigma$. If $\Sigma$ is equipped with the quotient metric $g_{\Sigma}$, then we have $-\Delta_{\Sigma}\phi=2\Lambda\phi$. In other words, $\phi$ is a principal Dirichlet eigenfunction of the Laplace-Beltrami operator on $\Sigma$, and the principal eigenvalue is given by $2\Lambda$.  
\end{proposition}

\section{Uniqueness of \texorpdfstring{the Einstein metric when $\chi(M)=4$}{the Einstein metric when chi(M)=4}}\label{5}
We shall now investigate the restriction that the Einstein equations impose over $\partial\Sigma$, i.e. the union of singular strata. 
\begin{lemma}\label{5.1}
    For $(M^4,g)$ a simply connected closed Einstein 4-manifold with isometric $T^2$-action, if the orbit space $\Sigma$ has constant sectional curvature, then all its edges must have equal length.
\end{lemma}
\begin{proof}
Let $Z$ be the codimention one stratum with isotropy subgroup $S(1,0)$. As mentioned in section \ref{2}, we can lift a Fermi coordinate on $\Sigma$ to a tubular neighborhood of $Z$, and use exponential map to obtain a  chart $(r,\varphi, t,\theta)$. Since the action is geometrically polar, we can assume $c_{ij}$ all vanish. In these coordinates the metric takes the form
\[    ds^2=dr^2+\mu^2dt^2+a^2d\varphi^2+2bd\varphi d\theta+d^2d\theta^2\]
Using the conditions for smoothness in lemma \ref{2.1}, we can set 
\begin{align*}
    \mu^2&=1-\frac{1}{2}\text{sec}_\Sigma(0,t) r^2+O(r^4)\\
    a^2&=r^2+a_0(t)r^4+O(r^6)\\
    b&=b_0(t)r^2+O(r^4)\\
    d^2&=d_0(t)+d_2(t)r^2+O(r^4)
\end{align*}
We can then compute the Ricci curvature using these parameters. We get that when $r=0$, the only non-zero components of the Ricci tensor are the diagonal elements:
\begin{align*}
\Ric{\partial_r}{\partial_r}&=-3a_0-\frac{1}{d_0}(d_2-b_0^2)+\text{sec}_{\Sigma}\\
\Ric{\partial_t}{\partial_t}&=-\frac{\partial_y^2d_0}{2d_0}+\frac{(\partial_yd_0)^2}{4d_0^2}+2\text{sec}_{\Sigma}\\
\Ric{\partial_\varphi}{\partial_\varphi}&=-3a_0-\frac{1}{d_0}(d_2-b_0^2)+\text{sec}_{\Sigma}\\
\Ric{\partial_\theta}{\partial_\theta}&=-\frac{\partial_y^2d_0}{2d_0}+\frac{(\partial_yd_0)^2}{4d_0^2}-\frac{2}{d_0}(d_2-b_0^2)
\end{align*}
If the quotient of $Z$ has length $l$ in $\Sigma$, then the length $\sqrt{d_0(t)}$ of the $T^2$ orbits for these points inside $Z$ is determined by the ODE boundary value problem: we need $\sqrt{d_0(0)}=\sqrt{d_0(l)}=0$ and $-\frac{\partial_y^2\sqrt{d_0}}{\sqrt{d_0}}+2\text{sec}_{\Sigma}=\Lambda$. In fact, from the smoothness conditions near the fixed points we know $\frac{d^2}{dt^2}|_{t=0}d_0=-\frac{d^2}{dt^2}|_{t=l}d_0=2$, which grants us extra Neumann boundary conditions $\frac{d}{dt}|_{t=0}\sqrt{d_0}=-\frac{d}{dt}|_{t=l}\sqrt{d_0}=1$. If we further assume that the orbit space has constant sectional curvature, then we can solve the ODE explicitly and get \[\sqrt{d_0}(t)=(\Lambda-2\text{sec}_\Sigma)^{-\frac{1}{2}}\sin((\Lambda-2\text{sec}_\Sigma)^\frac{1}{2}t)\]
 In particular, we have $l(\Lambda-2\text{sec}_\Sigma)^{\frac{1}{2}}=\pi$. Thus if $\Sigma$ has constant sectional curvature, all its edges must have the same length $(\Lambda-2\text{sec}_\Sigma)^{-\frac{1}{2}}\pi$.
 \end{proof}
Combining with the Ricci curvature formulae in the previous section, we have the following proposition:
\begin{proposition}
If we fix a $T^2$-action on $M$ and a metric $(\Sigma,g_\Sigma)$ on the orbit space, then there exists at most one Einstein metric on $M^4$ invariant under the $T^2$-action with orbit space isometric to $g_\Sigma$.
\end{proposition}
\begin{proof}
    Given a metric over the orbit space, the formula for $\Ric{V_i}{V_j}$ in the previous section can be interpreted as that $g_{ij}^\Omega$ must satisfy a second order linear PDE along with prescribed Dirichlet and Neumann boundary data at the same time. Indeed, using the formula for the trace of Ricci curvature along the horizontal direction, we know the Einstein equations require that $\text{Scal}_\Sigma+|H|^2-|B|^2=0$. But we also know that for any symmetric matrix field on $\Sigma$ with adjugate $\text{adj}(M)$, there must be the identity $g^{st}_\Sigma X_s(M)\text{adj}(M)X_t(M)=g^{st}_\Sigma X_s(\det(M))X_t(M)-Mg^{st}_\Sigma X_s(\text{adj}(M))X_t(M)$ and $g^{st}_\Sigma X_s(\text{adj}(M))X_t(M)$ is always a scalar matrix. Therefore we know the following identity must hold:
    \begin{align*}
        \metric{H}{\nabla_\Sigma \metric{V_i}{V_j}}+2g^{st}_\Sigma \metric{(\nabla_{X_s}V_i)^\bot}{(\nabla_{X_t}V_j)^\bot}
        &=-\frac{1}{2\phi^2}\metric{\nabla_\Sigma\phi^2}{\nabla_\Sigma\metric{V_i}{V_j}}\\
        &+\frac{1}{2}g^{kl}_\Omega g^{st}_\Sigma X_s\metric{V_i}{V_k}X_t\metric{V_j}{V_l}\\
        &=(|B|^2-|H|^2)\metric{V_i}{V_j}
    \end{align*}
    Putting it back to the formula for $\Ric{V_i}{V_j}$, we get that all entries of $g_{ij}^\Omega$ must satisfy 
    \begin{align}\label{eq:9}
    -\frac{1}{2}\Delta_{\Sigma}u+\frac{1}{2\phi}\metric{\nabla_{\Sigma}\phi}{\nabla_{\Sigma} u}+2\sec_{\Sigma}u=\Lambda u
    \end{align}
    Noticing that $\phi$ is determined by the principal Dirichlet eigenfunction of the orbit space, we have thus obtained a second order linear elliptic PDE for $g_{ij}^\Omega$ over the interior of $\Sigma$.\par
    An obstacle is that the coefficients of the PDE blow up near $\partial\Sigma$. However, for our purpose we only need to consider a very special class of solutions. To be more specific, we use $R^{\text{even}}$ to denote the set of functions on $\Sigma$ that admits a smooth even extension across all edges, as was the convention in section \ref{2}.
    We need to show the following:
    \begin{lemma}\label{5.2}
        Consider the equation \ref{eq:9}. Given any Dirichlet boundary condition there is at most one solution to the boundary value problem among functions in $R^{\text{even}}$.
    \end{lemma}
    \begin{proof}
    Observe that $u_0:=g_{11}^\Omega+g_{22}^\Omega$ is a solution and it is positive except at vertices, where it is zero. By Lemma \ref{2.2}, it is an element in $R^{\text{even}}$.
    Now assume $u$ is a solution among $R^{\text{even}}$ to the PDE satisfying homogeneous Dirichlet boundary condition.
    Since by Lemma \ref{2.2} at each vertex the second derivatives of $u_0$ are non-vanishing, we know $w:=\frac{u}{u_0}$ continues up to the boundary, vanishes along the boundary, and we have in the interior of $\Sigma$
\begin{align*}
-\frac{1}{2}\text{div}_\Sigma(u_0^2\nabla_\Sigma w)+\frac{1}{2\phi}\metric{\nabla_{\Sigma}\phi}{u_0^2\nabla_\Sigma w}
&=-\frac{1}{2}(u_0\Delta_{\Sigma}u-u\Delta_{\Sigma}u_0)\\
&+\frac{1}{2\phi}\metric{\nabla_{\Sigma}\phi}{u_0\nabla_{\Sigma} u-u\nabla_{\Sigma}u_0}\\
&=0
\end{align*}
So we can apply the strong maximum principle (c.f. \cite{PS04}) to $w$ and conclude that it cannot attain maximum in the interior of $\Sigma$. Indeed, for any $\epsilon$-neighborhood $N_\epsilon$ of $\partial\Sigma$, we have $\max\limits_{x\in \Sigma-N_\epsilon}w(x)=\max\limits_{x\in \partial(\Sigma-N_\epsilon)}w(x)$. Let $\tilde{w}(\epsilon):=\max\limits_{x\in \Sigma-N_\epsilon}w(x)$, then $w(\epsilon)$ is a non-increasing function. For each $\epsilon$, pick a point $x_\epsilon\in \partial(\Sigma-N_\epsilon)$ such that $\tilde{w}(\epsilon)=w(x_\epsilon)$. If $\{\epsilon_n\}$ is a sequence approaching $\epsilon_0$, then by compactness of $\Sigma$ there is a converging subsequence of $x_{\epsilon_n}$, and by the continuity of distance function the limit point $x$ is in $\partial(\Sigma-N_{\epsilon_0})$. Since $w$ is continuous up to the boundary, we have $\limsup\limits_{n\to\infty}\tilde{w}(\epsilon_n)=\limsup\limits_{n\to\infty}w(x_{\epsilon_n})=w(x)\leq \tilde{w}(\epsilon_0)$, so $\tilde{w}$ is upper semicontinuous. Actually, with a bit more effort it is not hard to see it is continuous.
Now since $\tilde{w}(0)=0$, we get $\tilde{w}\leq 0$ for all $\epsilon\geq 0$. The same argument applied to $-w$ forces $w$ to be constantly zero, so $u$ is also constantly zero.
    \end{proof}
For $g_{ij}^\Omega$, they all extend to smooth even functions across the edges, and the Dirichlet boundary conditions are determined by the action as well as the ODEs for $d_0$ along the edges. We can without loss of generality let $V_1,V_2$ be the action fields of $S(1,0),S(0,1)$, which are the isotropy groups of two adjacent codimension-two strata $Z,Z'$. Since the degree of the map $S(1,0)\to T^2\to T^2/S(p,q)$ is $q$ and the degree of the map $S(0,1)\to T^2\to T^2/S(p,q)$ is $p$, we know along a codimension-two stratum with isotropy group $S(p,q)$ we must have $g_{ij}^\Omega=\begin{bmatrix}
    q^2&pq\\
    pq&p^2
\end{bmatrix}\sqrt{d_0}$ where the length $\sqrt{d_0}$ of $T^2$ orbits are determined via solving the ODE. Thus in the interior of $\Sigma$ they are uniquely determined as well.
\end{proof}
\begin{remark}
    The formula for $\Ric{X}{Y}$ in the previous section also has a geometric interpretation. It is saying that $g_{ij}^\Omega$ must satisfy an ODE along every geodesic in $\Sigma$. Namely, if we parametrize a geodesic $\gamma(s)$ to have unit speed, we have
    \begin{align*}
        \text{sec}_{\Sigma}-(\det  g_{ij}^\Omega)^{-\frac{1}{2}}\frac{d^2}{ds^2}(\sqrt{\det g_{ij}^\Omega})+\frac{1}{2\det(g_{ij}^\Omega)}\det( \frac{d}{ds}g_{ij}^\Omega)=\Lambda
    \end{align*}
\end{remark}
We can now prove that among all actions over $M=S^2\times S^2$ or $\mathbb{CP}^2\#\bar{\mathbb{CP}}^2$ there is only one of them admitting invariant Einstein metric of non-negative sectional curvature. In this case the orbit space has four edges, and without loss of generalizty we can assume the isotropy groups are given consecutively by $S(1,0)$, $S(0,1)$, $S(1,0)$, $S(p,1)$, where $p$ is an even integer if the topology is $S^2\times S^2$, and an odd integer if the topology is $\mathbb{CP}^2\#\bar{\mathbb{CP}}^2$ \cite{Hi51}. Since the sectional curvature is non-negative and $\Sigma$ can be embedded into $M$ as a totally geodesic submanifold, we know the orbit space must be a flat square. 
\begin{proposition}
If $(\Sigma,g_\Sigma)$ is a flat square and $M$ has a $T^2$-invariant Einstein metric, then $M$ is isometric to the product $S^2\times S^2$ of two spheres of the same radius, and the action must be the product action.
\end{proposition}
\begin{proof}
    We can solve for all entries of the metric tensor explicitly. Let the square be given by $\{(x,y)|0\leq x,y\leq \pi\}$, and let the codimension-two stratum corresponding to $y=0,\pi$ have isotropy groups $S(0,1), S(p,1)$. The orbit volume function is given by the principal eigenfunction $\phi(x,y)=\sin(x)\sin(y)$, as one can verify its first derivatives along a codimension-two stratum. We then observe that
    \begin{align*}
        g_{11}^\Omega(x,y)&=\sin^2(x)\\
        g_{22}^\Omega(x,y)&=\frac{p^2}{2}\sin^2(x)(1-\cos(y))+\sin^2(y)\\
        g_{12}^\Omega(x,y)&=\frac{p}{2}\sin^2(x)(1-\cos(y))
    \end{align*}
    give the unique solutions to the second order linear PDEs mentioned in this section. However, they do not yield Einsteim metrics unless $p=0$: we have $\det(g_{ij}^\Omega)=\sin^2(x)\sin^2(y)+\frac{p^2}{2}\sin^4(x)\sin^2(y)\neq \phi^2$ unless $p=0$. 
\end{proof}

\section{Uniqueness of \texorpdfstring{the Einstein metric when $\chi(M)=2$}{the Einstein metric when chi(M)=2}}\label{6}
It remains to show that in the case $M=S^4$, a $T^2$-invariant Einstein metric is isometric to the round sphere. By \cite{OR70}, there is only one $T^2$-action on $S^4$ and its orbit space has only two edges. The two codimension-two strata have isotropy groups $S(1,0),S(0,1)$ respectively. Such an action is an example of a diagonal action, which we define below:
\begin{definition}\label{def:1}
    A $T^2$-action on a four-manifold $M$ is called a diagonal action if all the isotropy groups of codimension-two strata are either $S(1,0)$ or $S(0,1)$. A metric invariant under such $T^2$-action is called a diagonal metric if the action is geometrically polar and the action fields of $S(1,0)$ and $S(0,1)$ are orthogonal to each other everywhere.
\end{definition}
The motivation for studying diagonal metrics is provided by the following result:
\begin{proposition}
If there exists an Einstein metric on a closed simply connected four manifold that is invariant under a diagonal action, then it must be a diagonal metric.
\end{proposition}
\begin{proof}
    We let $V_1,V_2$ denote the action fields generated by $S(1,0),S(0,1)$ respectively, then $g_{12}^\Omega$ satisfies the linear PDE \ref{eq:9} in the previous section, and it vanishes along every codimension-two stratum. Therefore Lemma \ref{5.2} forces it to vanish everywhere, i.e. the action fields are everywhere orthogonal to each other.
\end{proof}
For diagonal metrics, we use $2\pi a,2\pi b$ to denote the lengths of the orbits of $S(1,0),S(0,1)$ at each point. Let $V,W$ be the corresponding action fields with length $a,b$. The curvature formulae are then greatly simplified. In fact, the only non-zero components are:
\begin{align*}
\metric{R(X,Y)X}{Y}&=\text{sec}_\Sigma|X\wedge Y|^2&
\metric{R(X,V)X}{V}&=-a\text{Hess}_\Sigma a(X,X)\\
\metric{R(X,V)Y}{V}&=-a\text{Hess}_\Sigma a(X,Y)&
\metric{R(X,W)X}{W}&=-b\text{Hess}_\Sigma b(X,X)\\
\metric{R(X,W)Y}{W}&=-b\text{Hess}_\Sigma b(X,Y)&
\metric{R(Y,V)Y}{V}&=-a\text{Hess}_\Sigma a(Y,Y)\\
\metric{R(Y,W)Y}{W}&=-b\text{Hess}_\Sigma b(Y,Y)&
\metric{R(V,W)V}{W}&=-ab\metric{\nabla_\Sigma a}{\nabla_\Sigma b}
\end{align*}


Let $s:=\text{sec}_\Sigma, p:=-a^{-1}b^{-1}\metric{\nabla_\Sigma a}{\nabla_\Sigma b}, A:=-a^{-1}\text{Hess}_\Sigma a, B:=-b^{-1}\text{Hess}_\Sigma b$. If a diagonal metric has non-negative sectional curvature, then $s,p,A,B$ are all non-negative. The curvature operator in the basis $\{\frac{1}{2}(\hat{X}\wedge \hat{Y}+\hat{V}\wedge\hat{W}), \frac{1}{2}(\hat{X}\wedge \hat{V}-\hat{Y}\wedge\hat{W}), \frac{1}{2}(\hat{X}\wedge \hat{W}+\hat{Y}\wedge\hat{V}), \frac{1}{2}(\hat{X}\wedge \hat{Y}-\hat{V}\wedge\hat{W}), \frac{1}{2}(\hat{X}\wedge \hat{V}+\hat{Y}\wedge\hat{W}), \frac{1}{2}(\hat{X}\wedge \hat{W}-\hat{Y}\wedge\hat{V})\}$ is given by 
\[\begin{bmatrix}
\frac{1}{2}(s+p)&0&0&\frac{1}{2}(s-p)&0&0\\
0&\frac{1}{2}(A_{11}+B_{22})&\frac{1}{2}(A_{12}-B_{12})&0&\frac{1}{2}(A_{11}-B_{22})&\frac{1}{2}(A_{12}+B_{12})\\
0&\frac{1}{2}(A_{12}-B_{12})&\frac{1}{2}(A_{22}+B_{11})&0&\frac{1}{2}(A_{12}+B_{12})&\frac{1}{2}(B_{11}-A_{22})\\
\frac{1}{2}(s-p)&0&0&\frac{1}{2}(s+p)&0&0\\
0&\frac{1}{2}(A_{11}-B_{22})&\frac{1}{2}(A_{12}+B_{12})&0&\frac{1}{2}(A_{11}+B_{22})&\frac{1}{2}(A_{12}-B_{12})\\
0&\frac{1}{2}(A_{12}+B_{12})&\frac{1}{2}(B_{11}-A_{22})&0&\frac{1}{2}(A_{12}-B_{12})&\frac{1}{2}(B_{11}+A_{22})
\end{bmatrix}\]
Therefore in the non-negative sectional curvature case the upper-left and lower-right three-by-three blocks are both non-negative. By lemma 2.1 of \cite{Ha97}, a metric has non-negative isotropic curvature if and only if the sum of the two smallest eigenvalues of these two blocks are non-negative. Therefore we have proven:

\begin{theorem}\label{6.2}
A diagonal metric on $M^4$ with non-negative sectional curvature has non-negative isotropic curvature.
\end{theorem}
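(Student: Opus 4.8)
The plan is to read the two diagonal $3\times3$ blocks of the curvature operator straight off the matrix displayed above, prove that each is positive semidefinite under the hypothesis $\sec\ge0$, and then quote Lemma 2.1 of \cite{Ha97}. A preliminary observation streamlines things: the upper-left and lower-right $3\times3$ blocks are literally the same symmetric matrix,
\[
\tfrac12\begin{bmatrix} s+p & 0 & 0\\ 0 & A_{11}+B_{22} & A_{12}-B_{12}\\ 0 & A_{12}-B_{12} & A_{22}+B_{11}\end{bmatrix},
\]
so it suffices to establish positive semidefiniteness of this single matrix.

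First I would record the sign conditions forced by $\sec_M\ge0$. Since the section furnishes a totally geodesic embedding of the surface $\Sigma$ into $M$, its Gauss curvature is an ambient sectional curvature, hence $s=\sec_\Sigma\ge0$. From the Riemann components listed above, $\metric{R(V,W)V}{W}=-ab\metric{\nabla_\Sigma a}{\nabla_\Sigma b}=a^2b^2p$ while $|V\wedge W|^2=a^2b^2$, so $p=\sec(V,W)\ge0$. For any $u,u'\in T\Sigma$ one has $\metric{R(u,V)u'}{V}=a^2A(u,u')$ and $|u\wedge V|^2=|u|^2a^2$, so $A(u,u)=|u|^2\sec(u,V)\ge0$; thus $A\succeq0$ as a symmetric bilinear form on $T\Sigma$, and symmetrically $B\succeq0$.

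The only real computation is then a short piece of linear algebra. The matrix above is itself block diagonal: a $1\times1$ block $\tfrac12(s+p)\ge0$, and a $2\times2$ block whose trace equals $\tfrac12(\trace A+\trace B)\ge0$ and whose determinant, after the cross terms cancel, equals $\tfrac14\bigl(\det A+\det B+\trace(AB)\bigr)$. Every summand here is nonnegative for $A,B\succeq0$ — in particular $\trace(AB)=\trace(A^{1/2}BA^{1/2})\ge0$ — so the $2\times2$ block, and hence the full $3\times3$ block, is positive semidefinite. Therefore both diagonal $3\times3$ blocks of the curvature operator have nonnegative spectrum, so a fortiori the sum of the two smallest eigenvalues of each is nonnegative, and Lemma 2.1 of \cite{Ha97} then gives nonnegative isotropic curvature.

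I expect no real obstacle here: the matrix computations are routine, and the single point meriting a sentence of care is the normalization, namely that the displayed frame of $2$-forms is a fixed positive multiple of an orthonormal self-dual/anti-self-dual basis of $\Lambda^2 T^*M$, so that Hamilton's criterion applies to exactly these two $3\times3$ blocks — and to both of them, since nonnegative isotropic curvature is the conjunction of the two orientation-dependent conditions.
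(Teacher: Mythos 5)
Your proof is correct and follows essentially the same route as the paper: read off the two diagonal $3\times3$ blocks of the curvature operator and invoke Lemma 2.1 of \cite{Ha97}. The only difference is that you supply the linear-algebra justification (the identity $\det = \tfrac14(\det A+\det B+\operatorname{tr}(AB))$ for the $2\times2$ sub-block) for the paper's one-line assertion that the blocks are positive semidefinite, which is a worthwhile detail to make explicit.
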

On the other hand, by \cite{Br10} we know an Einstein metric with non-negative isotropic curvature must be locally symmetric. Thus a $T^2$-invariant Einstein metric on $S^4$ with non-negative sectional curvature is isometric to the round metric.\par

\section{Other closed orientable examples}\label{7}
In this section we prove the following corollary of our main theorem, extending the classification to orientable examples.
\begin{theorem}
    Assume $(M^4,g)$ is an orientable, non-negatively curved, closed Einstein manifold with $T^2$-symmetry, then it is isometric to one of the following:
    \begin{enumerate}
        \item A simply connected example mentioned in \ref{1.1}
        \item One of the two isometric $\Z_2$ quotients of $S^2\times S^2$
        \item A flat 4-torus
        \item A hyperelliptic K\"ahler surface. These manifolds are the quotients of $T^4$ by some finite group action, and fall into seven diffeomorphism classes.
    \end{enumerate}
\end{theorem}
\begin{proof}
    We first deal with the $\Lambda>0$ case. In this case, the universal cover is given by one of the metrics mentioned in \ref{1.1}. The Lefschetz fixed point theorem or Synge's theorem guarantees that any orientation-preserving isometry of $S^4$ or $\mathbb{CP}^2$ must have fixed points. 
    For the Einstein metric on $S^2\times S^2$, the isometry group is a semidirect product $G:=(O(3)\times O(3))\rtimes \Z_2$, with $\Z_2$ permuting the two factors. There are only two conjugacy classes of non-trivial subgroups of $G$ acting on $S^2\times S^2$ without fixed point, both isomorphic to $\Z_2$. If we let $A$ denote the antipodal map on $S^2$ and $T$ denote a reflection across an equator, then up to conjugacy one of the two subgroups is generated by the involution $(A,A)$, and the other is generated by $(A,T)$.
    Finally, as mentioned in \cite{Le21}, since $S^2\times S^2/(A,T)$ is spin and $S^2\times S^2/(A,A)$ is not, they cannot be isometric to each other.\par
    We then deal with the $\Lambda=0$ case. In this case, the manifold $M$ is Ricci-flat with non-negative sectional curvature, hence it is flat. By the Weitzenb\"ock formula, 
    the Killing fields on a closed Ricci-flat manifold are parallel (c.f. \cite{Es94} Theorem 13.1.b).
    Let $\alpha_1,\alpha_2$ be the one forms dual to the Killing fields, then $\omega:=\alpha_1\wedge \alpha_2+*(\alpha_1\wedge \alpha_2)$ is a parallel symplectic form.
    The almost complex structure $J$ induced by $\omega$ is therefore also parallel, so $M$ is a K\"ahler surface. The flat K\"ahler surfaces have been classified in \cite{DHS09}, and they are either a flat torus or one of the seven hyperelliptic surfaces. Since the hyperelliptic surfaces all have $b_2=2$, they admit two independent parallel vector fields, and therefore these hyperelliptic surfaces all admit isometric $T^2$-actions.
\end{proof}

\section{Invariance under the Ricci flow}\label{8}
In this section we show that geometrically polar and diagonal are properties preserved by the Ricci flow.
\begin{theorem}
If $(g_t)_{t\in[0,T)}$ is a family of smooth metric on a simply connected closed 4-manifold $M$ satisfying the Ricci flow equation $\partial_tg_t=-2\text{Ric}(g_t)$. If there exists an isometric geometrically polar $T^2$-action on $(M,g_0)$, then the isometric $T^2$-action on $(M,g_t)$ is geometrically polar for all $t$.
\end{theorem}
\begin{proof}
Let $\bar{X},\bar{Y}$ be two vector fields on the orbit space, and $X_t,Y_t$ be the horizontal lift of them with respect to $g_t$. For any pair of $\bar{X},\bar{Y}$ linearly independent at $p$, the quantity $\frac{\metric{[X_t,Y_t]^\bot}{[X_t,Y_t]^\bot}}{|X_t\wedge Y_t|^2}$ assumes the same value, denoted as $\omega_t(p)$. The strategy is to pull it pack to $M$ and show that if the integral $\int_{M}\pi^*\omega_t d\text{vol}(g_t)$ vanishes at $t=0$, then it vanishes for all time. From previous sections we know the integral is bounded.\par
Let $\theta_1,\theta_2\in[0,2\pi)$ parametrize $T^2$, and $V_1,V_2$ be the action fields generated by $\partial_{\theta_1},\partial_{\theta_2}$. We use $\metric{-}{-}$ to denote the metric at time $t$, and $\metric{-}{-}_\Sigma$ to denote the metric on $\Sigma$ at time $t$. As before $g_{ij}^\Omega:=\metric{V_i}{V_j}$ denote the metric along the orbit. To abbreviate notation, we define $\phi:=\sqrt{\det(g^\Sigma_{ij}})$, $\mu:=|X_t\wedge Y_t|^2$. 
\begin{lemma}
    For $i=1,2$, let $\alpha^i:=\frac{1}{\mu}g^{ij}_\Omega\metric{[X_t,Y_t]}{V_j}$, then $\alpha^i$ evolves according to the following formula under the Ricci flow:
\begin{align*}
    \partial_t(\alpha^i)=\text{div}_\Sigma\frac{1}{\phi}g^{ij}_\Omega\nabla_\Sigma(\phi g_{jk}^\Omega\alpha^k)-\frac{\partial_t\mu}{\mu}\alpha^i
    \end{align*}
\end{lemma}
\begin{proof}

We have $X_t=X_0-g^{ij}_\Omega\metric{X_0}{V_i}V_j$, so \begin{align*}
    \partial_t\metric{X_0}{V_k}=-2\Ric{X_t}{V_k}-2g^{ij}_\Omega\metric{X_0}{V_i}\Ric{V_j}{V_k}
    \end{align*}
    We also have a formula for the components of O'Neill's A-tensor as follow: 
    \begin{align*}
    \metric{[X_t,Y_t]}{V_k}&=\metric{[X_0,Y_0]}{V_k}+[Y_0(g^{ij}_\Omega\metric{X_0}{V_i})-X_0(g^{ij}_\Omega)\metric{Y_0}{V_i})]\metric{V_j}{V_k}\\
    &=\metric{[X_0,Y_0]}{V_k}+Y_0(\metric{X_0}{V_k})-X_0(\metric{Y_0}{V_k})\\
    &-g^{ij}_\Omega\metric{X_0}{V_i}Y_0(\metric{V_j}{V_k})+g^{ij}_\Omega\metric{Y_0}{V_i}X_0(\metric{V_j}{V_k})
    \end{align*}

    Taking time derivative of the quantity above, we get the following evolution formula:
    \begin{align*}
    \partial_t\metric{[X_t,Y_t]}{V_k}&=-2\Ric{[X_0,Y_0]}{V_k}
    -2Y_0(\Ric{X_0}{V_k})+2X_0(\Ric{Y_0}{V_k})\\
    &+2g^{ij}_\Omega\metric{X_0}{V_i}Y_0(\Ric{V_j}{V_k})-2g^{ij}_\Omega\metric{Y_0}{V_i}X_0(\Ric{V_j}{V_k})\\
    &+2g^{ij}_\Omega\Ric{X_0}{V_i}Y_0(\metric{V_j}{V_k})-2g^{ij}_\Omega\Ric{Y_0}{V_i}X_0(\metric{V_j}{V_k})\\
    &-2g^{im}_\Omega g^{jl}_\Omega\Ric{V_l}{V_m}\metric{X_0}{V_i}Y_0(\metric{V_j}{V_k})\\
    &+2g^{im}_\Omega g^{jl}_\Omega\Ric{V_l}{V_m} \metric{Y_0}{V_i}X_0(\metric{V_j}{V_k})
\end{align*}
However, we still need to simplify this expression. We shall do it term by term. First we know that the second and third term can be expanded into the following:
\begin{align*}
    &-2Y_0(\Ric{X_0}{V_k})+2X_0(\Ric{Y_0}{V_k})\\
    &=-2Y_0(\Ric{X_t}{V_k})-2g^{ij}_\Omega \metric{X_0}{V_i}Y_0(\Ric{V_j}{V_k})\\
    &-2g^{ij}_\Omega Y_0(\metric{X_0}{V_i})\Ric{V_j}{V_k}
    +2g^{im}_\Omega g^{jl}_\Omega Y_0(\metric{V_m}{V_l})\metric{X_0}{V_i}\Ric{V_j}{V_k}\\
    &+2X_0(\Ric{Y_t}{V_k})+2g^{ij}_\Omega \metric{Y_0}{V_i}X_0(\Ric{V_j}{V_k})
    +2g^{ij}_\Omega X_0(\metric{Y_0}{V_i})\Ric{V_j}{V_k}\\
    &-2g^{im}_\Omega g^{jl}_\Omega X_0(\metric{V_m}{V_l})\metric{Y_0}{V_i}\Ric{V_j}{V_k}
\end{align*}
Put it back into the previous equation, we get 
\begin{align*}
    \partial_t\metric{[X_t,Y_t]}{V_k}&=-2\Ric{[X_0,Y_0]}{V_k}
    -2Y_0(\Ric{X_t}{V_k})+2X_0(\Ric{Y_t}{V_k})\\
    &-2g^{ij}_\Omega Y_0(\metric{X_0}{V_i})\Ric{V_j}{V_k}
    +2g^{im}_\Omega g^{jl}_\Omega Y_0(\metric{V_m}{V_l})\metric{X_0}{V_i}\Ric{V_j}{V_k}\\
    &+2g^{ij}_\Omega X_0(\metric{Y_0}{V_i})\Ric{V_j}{V_k}
    -2g^{im}_\Omega g^{jl}_\Omega X_0(\metric{V_m}{V_l})\metric{Y_0}{V_i}\Ric{V_j}{V_k}\\
    &+2g^{ij}_\Omega\Ric{X_0}{V_i}Y_0(\metric{V_j}{V_k})-2g^{ij}_\Omega\Ric{Y_0}{V_i}X_0(\metric{V_j}{V_k})\\
    &-2g^{im}_\Omega g^{jl}_\Omega\Ric{V_l}{V_m}\metric{X_0}{V_i}Y_0(\metric{V_j}{V_k})\\
    &+2g^{im}_\Omega g^{jl}_\Omega\Ric{V_l}{V_m} \metric{Y_0}{V_i}X_0(\metric{V_j}{V_k})
    \end{align*}
    We can use the following identity to simplify the last four terms.
    \begin{align*}
    &2g^{ij}_\Omega\Ric{X_t}{V_i}Y_0(\metric{V_j}{V_k})-2g^{ij}_\Omega\Ric{Y_t}{V_i}X_0(\metric{V_j}{V_k})\\
    &=2g^{ij}_\Omega\Ric{X_0}{V_i}Y_0(\metric{V_j}{V_k})-2g^{ij}_\Omega\Ric{Y_0}{V_i}X_0(\metric{V_j}{V_k})\\
    &-2g^{im}_\Omega g^{jl}_\Omega\Ric{V_l}{V_m}\metric{X_0}{V_i}Y_0(\metric{V_j}{V_k})\\
    &+2g^{im}_\Omega g^{jl}_\Omega\Ric{V_l}{V_m} \metric{Y_0}{V_i}X_0(\metric{V_j}{V_k})
    \end{align*}
We can then reduce the evolution equation for the components of O'Neill's A-tensor:
\begin{align*}
    \partial_t\metric{[X_t,Y_t]}{V_k}&=-2\Ric{[X_0,Y_0]}{V_k}
    -2Y_0(\Ric{X_t}{V_k})+2X_0(\Ric{Y_t}{V_k})\\
    &+2g^{ij}_\Omega\Ric{X_t}{V_i}Y_0(\metric{V_j}{V_k})-2g^{ij}_\Omega\Ric{Y_t}{V_i}X_0(\metric{V_j}{V_k})\\
    &-2g^{ij}_\Omega Y_0(\metric{X_0}{V_i})\Ric{V_j}{V_k}+2g^{ij}_\Omega g^{ml}_\Omega Y_0(\metric{V_m}{V_i})\metric{X_0}{V_l}\Ric{V_j}{V_k}\\
    &+2g^{ij}_\Omega X_0(\metric{Y_0}{V_i})\Ric{V_j}{V_k}-2g^{ij}_\Omega g^{ml}_\Omega X_0(\metric{V_m}{V_i})\metric{Y_0}{V_l}\Ric{V_j}{V_k}\\
    &=-2\Ric{[X_0,Y_0]}{V_k}
    -2Y_0(\Ric{X_t}{V_k})+2X_0(\Ric{Y_t}{V_k})\\
    &+2g^{ij}_\Omega\Ric{X_t}{V_i}Y_0(\metric{V_j}{V_k})-2g^{ij}_\Omega\Ric{Y_t}{V_i}X_0(\metric{V_j}{V_k})\\
    &-2g^{ij}_\Omega\metric{[X_t,Y_t]-[X_0,Y_0]}{V_i}\Ric{V_j}{V_k}\\
    &=-2Y_0(\Ric{X_t}{V_k})+2X_0(\Ric{Y_t}{V_k})+2g^{ij}_\Omega\Ric{X_t}{V_i}Y_0(\metric{V_j}{V_k})\\
    &-2g^{ij}_\Omega\Ric{Y_t}{V_i}X_0(\metric{V_j}{V_k})-2g^{ij}_\Omega\metric{[X_t,Y_t]}{V_i}\Ric{V_j}{V_k}
    \end{align*}
    In other words, we get \[\partial_t(g^{ij}_\Omega\metric{[X_t,Y_t]}{V_j})=-2Y_0(g^{ij}_\Omega\Ric{X_t}{V_j})+2X_0(g^{ij}_\Omega\Ric{Y_t}{V_j})\]
    
    Now we can use the formulae for Ricci curvature obtained in the proof of theorem \ref{3.2}. The metric $g_{ij}^\Sigma$ determines a complex structure $J$ on $\Sigma$ such that
    \begin{align*}
        J\bar{X}&=-\frac{1}{|\bar{X}\wedge \bar{Y}|^2}(\metric{\bar{X}}{\bar{Y}}\bar{X}-\metric{\bar{X}}{\bar{X}}\bar{Y})\\
        J\bar{Y}&=-\frac{1}{|\bar{X}\wedge \bar{Y}|^2}(\metric{\bar{Y}}{\bar{Y}}\bar{X}-\metric{\bar{X}}{\bar{Y}}\bar{Y})
    \end{align*}
    The Ricci curvature formulae then give us the following: \begin{align*}
    \partial_t(g^{ij}_\Omega\metric{[X_t,Y_t]}{V_j})&=
    -2Y_0(g^{ij}_\Omega\Ric{X_t}{V_j})+2X_0(g^{ij}_\Omega\Ric{Y_t}{V_j})\\
    &=-\bar{X}(g^{ij}_\Omega\frac{\mu}{\phi}J\bar{Y}(\frac{\phi}{\mu}\metric{[X_t,Y_t]}{V_j}))
    +\bar{Y}(g^{ij}_\Omega\frac{\mu}{\phi}J\bar{X}(\frac{\phi}{\mu}\metric{[X_t,Y_t]}{V_j}))
    \\
    &=\mu\text{div}_\Sigma\frac{1}{\phi}g^{ij}_\Omega\nabla_\Sigma(\frac{\phi}{\mu}\metric{[X_t,Y_t]}{V_j})
    \end{align*}
    The evolution formula for $\alpha^i$ now follows by applying the Leibniz rule.
\end{proof}

Having obtained the evolution equation for the integrand, we then investigate the evolution of the integral. Based on the discussion of horizontal lifts near singular strata in section \ref{2}, we know $\partial_{\eta}(\phi g_{ij}^\Omega\alpha^i)\alpha^j=0$ along $\partial\Sigma$, where $\partial_\eta$ denote the derivative in unit normal direction. Through integration by parts, we get
    \begin{align*}
\partial_t\int_{M}\pi^*\omega_td\text{vol}(g_t)
&=4\pi^2\partial_t\int_{\Sigma}\phi g_{ij}^\Omega\alpha^i\alpha^jd\text{vol}(g_\Sigma)\\
&=4\pi^2\int_{\Sigma}[\partial_t(\phi g_{ij}^\Omega)\alpha^i\alpha^j+2\phi g_{ij}^\Omega\partial_t(\alpha^i)\alpha^j+\frac{\partial_t\mu}{\mu}\phi g_{ij}^\Omega\alpha^i\alpha^j]d\text{vol}(g_\Sigma)\\
&=4\pi^2\int_{\Sigma}[\partial_t(\phi g_{ij}^\Omega)\alpha^i\alpha^j+2\phi g_{ij}^\Omega\alpha^j\text{div}_\Sigma\frac{1}{\phi}g^{il}_\Omega\nabla_\Sigma(\phi g_{lk}^\Omega\alpha^k)\\
&-\frac{\partial_t\mu}{\mu}\phi g_{ij}^\Omega\alpha^i\alpha^j]d\text{vol}(g_\Sigma)\\
&=8\pi^2\int_{\partial\Sigma}\partial_{\eta}(\phi g_{ij}^\Omega\alpha^i)\alpha^jdl|_{\partial\Sigma}-\frac{\partial_t\mu}{\mu}\phi g_{ij}^\Omega\alpha^i\alpha^j]d\text{vol}(g_\Sigma)\\
&+4\pi^2\int_{\Sigma}[\partial_t(\phi g_{ij}^\Omega)\alpha^i\alpha^j
-\frac{2}{\phi}g^{il}_\Omega\metric{\nabla_\Sigma(\phi g_{ij}^\Omega\alpha^j)}{\nabla_\Sigma(\phi g_{lk}^\Omega\alpha^k)}_\Sigma\\
&=-8\pi^2\int_{\Sigma}\frac{1}{\phi}g^{il}_\Omega\metric{\nabla_\Sigma(\phi g_{ij}^\Omega\alpha^j)}{\nabla_\Sigma(\phi g_{lk}^\Omega\alpha^k)}_\Sigma d\text{vol}(g_\Sigma)\\
&+4\pi^2\int_{\Sigma}[\partial_t(\phi g_{ij}^\Omega)\alpha^i\alpha^j
-\frac{\partial_t\mu}{\mu}\phi g_{ij}^\Omega\alpha^i\alpha^j]d\text{vol}(g_\Sigma)
\end{align*}
 We notice that the first term is non-negative. Given any compact time interval $[0,T]$ on which the Ricci flow is non-singular, let $C$ be a constant bigger than $\max\{\max_M|\text{Ric}(g_t)||t\in [0,T]\}$ and $\max\{|\frac{\partial_t\mu}{\mu}|(p)|(p,t)\in M\times [0,T]\}$. By Jacobi's identity, $\phi^{-1}\partial_\phi=\frac{1}{2}g^{ij}_\Omega \partial_tg_{ij}^\Omega=-g^{ij}_\Omega \Ric{V_i}{V_j}\leq C$. We then have the following monotonicity:
\begin{align*}
e^{3Ct}\partial_t(e^{-3Ct}\int_{M}\pi^*\omega_td\text{vol}(g_t))
&=\partial_t\int_{M}\pi^*\omega_td\text{vol}(g_t)
-3C\int_{M}\pi^*\omega_td\text{vol}(g_t)\\ 
&\leq4\pi^2\int_{\Sigma}[\partial_t(\phi g_{ij}^\Omega)\alpha^i\alpha^j
-\frac{\partial_t\mu}{\mu}\phi g_{ij}^\Omega\alpha^i\alpha^j-3C\phi g_{ij}^\Omega\alpha^i\alpha^j]d\text{vol}(g_\Sigma)\\
&\leq 0
\end{align*}

Therefore if initially the action is geometrically polar, we have $\int_{M}\pi^*\omega_td\text{vol}(g_t))=0$ for all $t$, i.e. the action is geometrically polar for all time.
\end{proof}
A similar argument provides us with the following:
\begin{proposition}
Diagonal metrics are preserved under the Ricci flow.
\end{proposition}
\begin{proof}
    Based on the computation near singular strata, we know $\frac{(g_{12}^\Omega)^2}{\phi}$ is bounded over $\Sigma$, as $g_{12}^\Omega$ vanishes to second order. We can consider the evolution of the quantity $\int_{\Sigma}\phi^{-1}(g_{12}^\Omega)^2d\text{vol}_{\Sigma}$. From the computation in section \ref{4} we get the following:
    \begin{align*}
        \partial_t\phi=\frac{1}{2}\phi g^{ij}_\Omega\partial_tg_{ij}^\Omega=-\phi g^{ij}_\Omega\Ric{V_i}{V_j}=\Delta_\Sigma\phi
    \end{align*}
    Therefore we have the following evolution equation for our integrand:
    \begin{align*}
        \partial_t(\frac{(g_{12}^\Omega)^2}{\phi})&=\frac{2g_{12}^\Omega\partial_t(g_{12}^\Omega)}{\phi}-\frac{(g_{12}^\Omega)^2\partial_t\phi}
        {\phi^2}\\
        &=-\frac{4g_{12}^\Omega\Ric{V_1}{V_2}}{\phi}-\frac{(g_{12}^\Omega)^2\Delta_\Sigma\phi}
        {\phi^2}\\
        &=2g_{12}^\Omega\text{div}_\Sigma(\phi^{-1}\nabla_\Sigma g_{12}^\Omega)-4\phi^{-1}(|B|^2-|H|^2)(g_{12}^\Omega)^2-\frac{(g_{12}^\Omega)^2\Delta_\Sigma\phi}{\phi^2}
    \end{align*}
    Let $\mu$ denote the area element of $\Sigma$ as before. We then differentiate the integral over $\Sigma$ to get:
    \begin{align*}
        \partial_t\int_\Sigma \frac{(g_{12}^\Omega)^2}{\phi}d\text{vol}(g_\Sigma)&=2\int_\Sigma g_{12}^\Omega\text{div}_\Sigma(\phi^{-1}\nabla_\Sigma g_{12}^\Omega)d\text{vol}(g_\Sigma)\\
        &-4\int_\Sigma\phi^{-1}(|B|^2-|H|^2+\frac{1}{4}\phi^{-1}\Delta_\Sigma\phi-\frac{1}{4}\mu^{-1}\partial_t\mu)(g_{12}^\Omega)^2d\text{vol}(g_\Sigma)\\
        &=2\int_\Sigma g_{12}^\Omega\phi^{-1}\partial_\eta g_{12}^\Omega dl|_{\partial\Sigma}-2\int_\Sigma \phi^{-1}\metric{\nabla_\Sigma g_{12}^\Omega}{\nabla_\Sigma g_{12}^\Omega} d\text{vol}(g_\Sigma)\\
        &-4\int_\Sigma\phi^{-1}(|B|^2-|H|^2+\frac{1}{4}\phi^{-1}\Delta_\Sigma\phi-\frac{1}{4}\mu^{-1}\partial_t\mu)(g_{12}^\Omega)^2d\text{vol}(g_\Sigma)\\
        &=-2\int_\Sigma \phi^{-1}\metric{\nabla_\Sigma g_{12}^\Omega}{\nabla_\Sigma g_{12}^\Omega} d\text{vol}(g_\Sigma)\\
        &-\int_\Sigma\phi^{-1}(5g_{ij}^\Omega\Ric{V_i}{V_j}-4g_{st}^\Sigma\Ric{X_s}{X_t}+4\text{Scal}_\Sigma\\
        &-\mu^{-1}\partial_t\mu)(g_{12}^\Omega)^2d\text{vol}(g_\Sigma)
    \end{align*}
    The boundary term must vanish as $g_{12}^\Omega$ vanishes to second order, and $\phi$ only vanishes to first order. For any time interval $[0,T]$  on which the Ricci flow is smooth, let $C$ be a constant bigger than 
    $\max\{-(5g_{ij}^\Omega\Ric{V_i}{V_j}-4g_{st}^\Sigma\Ric{X_s}{X_t}+4\text{Scal}_\Sigma-\mu^{-1}\partial_t\mu)|x\in\Sigma,t\in[0,T]\}$, we get 
    \begin{align*}
        \partial_t\int_\Sigma \frac{(g_{12}^\Omega)^2}{\phi}d\text{vol}(g_\Sigma)\leq -2\int_\Sigma \phi^{-1}\metric{\nabla_\Sigma g_{12}^\Omega}{\nabla_\Sigma g_{12}^\Omega} d\text{vol}(g_\Sigma)
        +C\int_\Sigma\phi^{-1}(g_{12}^\Omega)^2d\text{vol}(g_\Sigma)
    \end{align*}
    Hence we have $e^{Ct}\partial_t(e^{-Ct}\int_\Sigma \frac{(g_{12}^\Omega)^2}{\phi}d\text{vol}(g_\Sigma))\leq 0$, so if the initial metric is diagonal we have $\int_\Sigma \frac{(g_{12}^\Omega)^2}{\phi}d\text{vol}(g_\Sigma)=0$ for all time, which implies that the metric is diagonal for all time.
\end{proof}

\section*{Declarations}
\begin{itemize}
     \item Ethics approval and consent to participate: Not Applicable.
    \item Funding: No funding was received to assist with the preparation of this manuscript.
    \item The author declares no conflict of interest.
\end{itemize}

\bibliography{research}

\end{document}